\newtheorem{teo}{Theorem}[section]
\newtheorem{lemma}[teo]{Lemma}
\newtheorem{defi}[teo]{Definition}
\newtheorem{fatto}[teo]{Fact}
\newtheorem{cla}[teo]{Claim}
\newtheorem{prop}[teo]{Proposition}
\theoremstyle{remark}
\newtheorem{remark}[teo]{Remark}
\begin{document}

\newcommand{\ran}{\texttt{ran}}
\newcommand{\cof}{\texttt{cof}}
\newcommand{\dom}{\texttt{dom}}
\newcommand{\N}{\mathcal{N}}
\newcommand{\up}{\upharpoonright}
\newcommand{\urltilde}{\kern -.15em\lower .7ex\hbox{~}\kern .04em}

\newcommand\Acal{\mathscr{A}}
\newcommand\Bcal{\mathscr{B}}
\newcommand\Ecal{\mathscr{E}}
\newcommand\Fcal{\mathscr{F}}
\newcommand\Hcal{\mathscr{H}}
\newcommand\Ical{\mathscr{I}}
\newcommand\Ncal{\mathscr{N}}
\newcommand\Mcal{\mathscr{M}}
\newcommand\Pcal{\mathscr{P}}
\newcommand\Qcal{\mathscr{Q}}
\newcommand\Rcal{\mathscr{R}}
\newcommand\Tcal{\mathscr{T}}
\newcommand\Ucal{\mathscr{U}}
\newcommand\Zcal{\mathscr{Z}}
\newcommand\Rbb{\mathbb{R}}
\newcommand\Nfrak{\mathfrak{N}}
\newcommand\Pfrak{\mathfrak{P}}
\newcommand\restrict{\restriction}

\newcommand{\diff}{\operatorname{\mathrm diff}}
\newcommand{\Ht}{\operatorname{\mathrm ht}}
\newcommand{\lev}{\operatorname{\mathrm lev}}
\newcommand{\meet}{\wedge}
\newcommand\triord{\triangleleft}
\newcommand{\Th}{{}^{\mathrm{th}}}
\newcommand{\St}{{}^{\mathrm{st}}}
\newcommand\axiom{\mathrm}
\newcommand\MA{\axiom{MA}}
\newcommand\MM{\axiom{MM}}
\newcommand\PFA{\axiom{PFA}}
\newcommand\BPFA{\axiom{BPFA}}
\newcommand\MRP{\axiom{MRP}}
\newcommand\SRP{\axiom{SRP}}
\newcommand\ZFC{\axiom{ZFC}}
\newcommand\CAT{\axiom{CAT}}
\newcommand\CH{\axiom{CH}}
\newcommand{\<}{\langle}
\renewcommand{\>}{\rangle}
\newcommand\mand{\textrm{ and }}
\renewcommand{\diamond}{\diamondsuit}
\newcommand{\forces}{\Vdash}

\newcommand{\cf}{{\mbox{cof}}}
\newcommand{\height}{\Ht}
\newcommand\NS{\mathrm{NS}}

\title[Five element basis]{A direct proof of the five element basis theorem}

\author[Boban Veli\v{c}kovi\'{c}]{Boban Veli\v{c}kovi\'{c}}
\email{boban@math.univ-paris-diderot.fr}
\urladdr{http://www.logique.jussieu.fr/\urltilde boban}
\author{Giorgio Venturi}
\email{gio.venturi@gmail.com}
\address{IMJ-PRG,
Universit\'e Paris Diderot,
 75205 Paris Cedex 13, France}

\keywords{Aronszajn trees, Countryman type, forcing axioms, BPFA, linear order,
Shelah's conjecture}

\subjclass[2000]{Primary: 03E35, 03E75, 06A05; Secondary:
03E02}

\begin{abstract}{We present a direct proof of the consistency
of the existence of a five element basis for the uncountable
linear orders. Our argument is based on the approach
of Larson, Koenig, Moore and Velickovic
and simplifies the original
proof of Moore.}

\end{abstract}
\maketitle

\section*{Introduction}

In \cite{linear_basis} Moore showed that $\PFA$ implies that the class
of the uncountable linear orders has a five element basis,
i.e., that there is a list of five uncountable
linear orders such that every uncountable linear order contains an
isomorphic copy of one of them. This basis consists of $X$,
$\omega_1$, $\omega_1^*$, $C$,  and $C^*$, where $X$ is any suborder of
the reals of cardinality $\omega_1$ and $C$ is any
Countryman line\footnote{Recall that a {\em Countryman line} is an uncountable
linear order whose square is the union of countably many non-decreasing relations.
The existence of such a linear order was proved by Shelah in \cite{Countryman:Shelah}.}.
It was previously known from the work of Baumgartner \cite{aleph_1-dense}
and Abraham-Shelah \cite{club_isomorphic}, that, assuming a rather
weak forcing axiom, the existence of a five element linear basis
for uncountable linear orderings is equivalent to the following
statement, called the {\em Coloring Axiom for Trees} ($\CAT$):
\begin{quote}
There is a normal Aronszajn tree $T$ such that for every $K\subseteq T$
there is an uncountable antichain $X\subseteq T$ such that
$\wedge(X)$ is either contained in or disjoint from $K$.
\end{quote}
Here, $\wedge(X)$ is the set of all pairwise meets of
incomparable elements of $X$.

\noindent One feature of the argument from \cite{linear_basis}
is that it relies crucially on the Mapping
Reflection Principle ($\MRP$), a strong combinatorial principle
previously introduced by Moore in \cite{MRP},
in order to prove the properness of the appropriate forcing notion.
It was shown in \cite{MRP} that $\MRP$ implies the failure of $\square_\kappa$,
for all $\kappa \geq \omega_1$, and therefore its consistency
requires very large cardinal axioms.
However, it was not clear if any large cardinals  were needed
for the relative consistency of $\CAT$.
Progress on this question was made by
K\"onig, Larson, Moore and Veli\v{c}kovi\'{c} in \cite{linear_phi}
who  reduced considerably the large cardinal assumptions in Moore's proof.
They considered a statement $\varphi$ which is a form of saturation
of Aronszajn trees and showed that it can be used instead of $\MRP$
in the proof of the Key Lemma (Lemma 5.29) from \cite{linear_basis}.
Moreover, they showed that for the consistency of BPFA together with $\varphi$
it is sufficient to assume the existence of a reflecting Mahlo cardinal.
If one is only interested in the consistency of the existence of a  five element
basis for the uncountable linear orderings then even an smaller large
cardinal assumption is sufficient (see \cite{linear_phi} for details).

The purpose of this note is to present a direct proof of $\CAT$,
and therefore the existence of a five element linear basis,
assuming the conjunction of $\BPFA$ and $\varphi$. The argument
is much simpler than the original proof from \cite{linear_basis}.
It is our hope that by further understanding this forcing one
will be able to determine if any large cardinal assumptions
are needed for the consistency of $\CAT$.

The paper is organized as follows. In \S 1 we present the background
material on Aronszajn trees and the combinatorial principles $\psi$ and $\varphi$.
In \S 2 we start with a coherent special Aronszajn tree $T$ and a subset
$K$ of $T$, define a new coloring of finite subsets of $T$ and prove some
technical lemmas.  In \S 3 we define the main forcing notion $\partial^*(K)$
and show that it is proper. In \S 4 we complete the proof that $\BPFA$ together
with $\varphi$ implies $\CAT$.

\section{Saturation of Aronszajn trees}

Recall that a {\em tree}  is a partially ordered $(T,<)$ such that
for every $t\in T$ the set of predecessors of $t$, i.e.
 $\{ s\in T: s <t\}$, is well ordered. If $t\in T$ we
 let $\Ht(t)$ denote the height of $t$ in $T$, i.e. the order type
 of $\{ s\in T: s <t\}$.  If $t\in T$ and $\xi < \Ht(t)$ we let $t\restriction \xi$
 denote the unique predecessor of $t$ of height $\xi$.
 For an ordinal $\alpha$ we let $T_\alpha$ denote the $\alpha$-th level of $T$
 i.e. the set of all $t\in T$ of height $\alpha$. 
 $T$ is called {\em normal} if it has a unique least element, i.e. the {\em root}, 
 any two nodes of limit height that have the same predecessors are actually equal. 
 If $T$ is normal and $s,t\in T$ then there is the largest node, denoted
 by $s\wedge t$, below $s$ and $t$. We also refer to $s\wedge t$ as the {\em meet}
 of $s$ and $t$. 
 The {\em height} of $T$, denoted by $\Ht(T)$, is
 the least $\alpha$ such that $T_\alpha$ is empty. A {\em chain} in $T$
 is a totally ordered subset $C$ of $T$. An {\em antichain} in $T$ is a subset $A$ of
 $T$ such that any two elements of $A$ are pairwise incomparable. 
 By an \emph{Aronszajn tree} or simply an \emph{A-tree}
we mean a tree of height $\omega_1$ in which all levels and chains are at most countable.
A \emph{subtree} of an A-tree $T$ is an uncountable
downwards closed subset of $T$. 
 We start by discussing the notion of saturation of an Aronszajn tree.

\begin{defi}
An Aronszajn tree $T$ is \emph{saturated}
if, whenever $\Acal$ is a collection of subtrees of $T$ such that
the intersection of any two trees in $\Acal$ is at most countable, 
$\Acal$ has cardinality at most $\omega_1$.
\end{defi}
It was shown by Baumgartner in \cite{bases_Atrees} that the following holds
after Levy collapsing an inaccessible cardinal to $\omega_2$ with countable conditions.
\begin{quote}
For every Aronszajn tree $T$, there is a collection $\Bcal$ of
subtrees of $T$ such that $\Bcal$ has cardinality $\omega_1$
and every subtree of $T$ contains an element of $\Bcal$.
\end{quote}
Clearly, this statement implies that every A-tree is saturated. 
In order to obtain $\CAT$ we will use a form of saturation
of A-tree together with $\BPFA$.
In Baumgartner's model $\CH$ holds hence it is not suitable for our purpose.  
It is for this reason that a different approach was
taken in \cite{linear_phi}. We now recall the relevant
definitions from this paper.

If $\Fcal$ is a collection of subtrees of $T$, then $\Fcal^\perp$ is
the collection of all subtrees $B$ of $T$ such that for every $A$ in
$\Fcal$, $A \cap B$ is countable.
If $\Fcal^\perp$ is empty, then $\Fcal$ is said to be \emph{predense}.
For $\Fcal$ a collection of subtrees of an Aronszajn tree $T$, we
consider the following statements:
\begin{description}

\item[$\psi_0(\Fcal)$]
There is a closed unbounded set $E \subseteq \omega_1$ and a
continuous chain $(N_\nu:\nu \in E)$ of countable
subsets of $\Fcal$ such that for every $\nu$ in $E$
and $t$ in $T_\nu$
there is a $\nu_t < \nu$ such that if $\xi \in (\nu_t,\nu)
\cap E$, then there is $A \in N_\xi$ such that
$t \restriction \xi$ is in $A$.

\end{description}
\begin{description}

\item[$\varphi_0(\Fcal)$]
There is a closed unbounded set $E \subseteq \omega_1$ and a
continuous chain $(N_\nu:\nu \in E)$ of countable
subsets of $\Fcal \cup \Fcal^\perp$ such that for every $\nu$ in $E$
and $t$ in $T_\nu$ either

\begin{enumerate}

\item there is a $\nu_t < \nu$ such that if $\xi \in (\nu_t,\nu)
\cap E$, then there is $A \in {\Fcal} \cap N_\xi$ such that $t
\restriction \xi$ is in $A$, or

\item there is a $B$ in ${\Fcal}^\perp \cap N_\nu$ such that $t$
is in $B$.

\end{enumerate}
\end{description}
It is not difficult to show that $\psi_0(\Fcal)$ implies that
$\Fcal$ is predense, indeed that $\bigcup_\nu N_\nu$ is predense.
Hence, if $\psi_0(\Fcal)$ holds for every predense family of subtrees of $T$,
then $T$ is saturated. 
It is also clear that $\psi_0(\Fcal)$ is a $\Sigma_1$-formula
with parameters $\Fcal$ and $T$.
While $\varphi_0(\Fcal)$ and $\psi_0(\Fcal)$ are equivalent if
$\Fcal$ is predense, $\varphi_0(\Fcal)$ is in general
not a $\Sigma_1$-formula in $\Fcal$ and $T$.
Let $\varphi$ be the assertion that if $T$ is an Aronszajn tree
and $\Fcal$ is a family of subtrees $T$ then $\varphi_0(\Fcal)$ holds, and
let $\psi$ be the analogous assertion but with quantification only
over $\Fcal$ that are predense.
As noted, $\varphi$ implies $\psi$.
The following was proved as Corollary 3.9 in  \cite{linear_phi}.

\begin{prop}\label{force_phi(F)}
For a given family $\Fcal$ of subtrees of an Aronszajn tree $T$,
there is a proper forcing extension which satisfies $\varphi_0(\Fcal)$.
\qed
\end{prop}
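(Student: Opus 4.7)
The plan is to introduce a proper forcing $\Pcal$ whose conditions are countable approximations, with finite sequences of countable elementary submodels as side conditions, to a continuous chain witnessing $\varphi_0(\Fcal)$. A condition is a pair $(p,\Mcal)$ in which $p=\langle N_\nu : \nu\in e_p\rangle$ is a countable $\subseteq$-increasing continuous chain of countable subsets of $\Fcal\cup\Fcal^\perp$ indexed by a countable closed $e_p\subseteq\omega_1$ containing its supremum, satisfying the local form of clauses~(1)--(2) of $\varphi_0(\Fcal)$ at every $\nu\in e_p$; and $\Mcal$ is a finite $\in$-chain of countable elementary submodels of $(H(\theta),\in)$, for $\theta$ sufficiently large, each containing $T$ and $\Fcal$, coherent with $p$ in the sense that $\delta_M:=M\cap\omega_1\in e_p$ and $(\Fcal\cup\Fcal^\perp)\cap M\subseteq N_{\delta_M}$ for every $M\in\Mcal$. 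The order is componentwise end-extension. Density arguments ensuring that $\bigcup_{p\in G}e_p$ is unbounded and that enough elements of $\Fcal\cup\Fcal^\perp$ appear in $\bigcup N$ are routine, so the generic filter assembles a club $E\subseteq\omega_1$ and continuous chain $\langle N_\nu:\nu\in E\rangle$ witnessing $\varphi_0(\Fcal)$ in $V^{\Pcal}$.

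The real work is properness. Given a countable $M\prec H(\theta)$ with $\Pcal,T,\Fcal\in M$ and a condition $(q,\Ncal)\in M$, I would produce an $(M,\Pcal)$-generic extension of the form $(p,\Ncal\cup\{M\})$. Set $\delta:=M\cap\omega_1$ and fix an internal $\in$-chain $\langle M_n:n<\omega\rangle\in M$ with $\bigcup_n M_n=M$ and $(q,\Ncal)\in M_0$. Extend $q$ by adjoining each $\delta_n:=M_n\cap\omega_1$ together with $\delta$ to the domain, setting $N_{\delta_n}:=(\Fcal\cup\Fcal^\perp)\cap M_n$ and $N_\delta:=(\Fcal\cup\Fcal^\perp)\cap M$. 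Since $T_{\delta_n}$ is countable and lies in $M_n$, one has $T_{\delta_n}\subseteq M_n$; in particular every initial segment $t\restriction\delta_n$ of any node $t\in T_\delta$ lies in $M_n$, making reflection available at each stage.

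The main obstacle is verifying clauses~(1)--(2) of $\varphi_0(\Fcal)$ at the new level $\delta$. For each $t\in T_\delta$ one must exhibit either a threshold $\nu_t<\delta$ such that $t\restriction\xi$ lies in some element of $\Fcal\cap N_\xi$ for every $\xi\in e_p\cap(\nu_t,\delta)$, or a subtree $B\in\Fcal^\perp\cap M$ containing $t$. The natural dichotomy is: if there exists $A\in\Fcal\cap M$ with $t\restriction\xi\in A$ for cofinally many $\xi<\delta$ (equivalently, by downward closure, for all $\xi<\delta$), then choosing $\nu_t$ large enough that $A\in N_{\nu_t}$ verifies clause~(1); otherwise, a reflection argument inside $M$ must exhibit a subtree $B\in\Fcal^\perp\cap M$ through $t$. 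This second horn is the technical heart of the proof: it requires showing that the absence of a cofinally absorbing $A\in\Fcal\cap M$ forces the existence of a witnessing $B\in\Fcal^\perp\cap M$, a combinatorial reflection statement that draws on the full power of the elementarity of $M$ and of the $M_n$'s together with the closure properties of $\Fcal^\perp$. Once this key lemma is established, $(p,\Ncal\cup\{M\})$ is a condition, and a standard side-condition density argument in the spirit of \cite{linear_basis,linear_phi} shows it is $(M,\Pcal)$-generic, proving properness; genericity then yields $\varphi_0(\Fcal)$ in the extension.
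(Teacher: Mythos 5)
The paper does not actually prove this proposition: it is quoted with a \qed as Corollary 3.9 of \cite{linear_phi}, so the only meaningful comparison is with the argument there, which your sketch does resemble in outline (countable approximations to the chain decorated with elementary submodels as side conditions, with properness reduced to a reflection statement at $\delta=M\cap\omega_1$). The difficulty is that your proposal stops exactly where the mathematics begins. The entire content of $\varphi_0(\Fcal)$ is the dichotomy at the top level: for $t\in T_\delta$ for which clause~(1) fails, one must \emph{produce} some $B\in\Fcal^\perp\cap M$ with $t\in B$. You call this ``the technical heart of the proof'' and then write ``once this key lemma is established'' --- but establishing it \emph{is} the proposition. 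Without it you have only rephrased the statement as a forcing whose conditions may fail to extend below $M$: if the dichotomy could fail at some $t\in T_\delta$, then $(p,\Ncal\cup\{M\})$ would not be a condition and properness would collapse. Note also that neither $t$ nor the downward closure of the cone above $t$ belongs to $M$, so the required $B$ must be located by a genuine covering or pressing-down argument carried out inside $M$; and nodes $t$ with only countably many extensions in $T$ need separate treatment, since their cone does not generate a subtree at all.

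Two local errors compound this. First, the claim that ``$T_{\delta_n}$ is countable and lies in $M_n$, hence $T_{\delta_n}\subseteq M_n$'' is false: $\delta_n=M_n\cap\omega_1$ is not an element of $M_n$, so neither is the level $T_{\delta_n}$, and in fact no node of height $\delta_n$ can belong to $M_n$ (its domain would put $\delta_n$ into $M_n$). The ``reflection available at each stage'' you invoke therefore does not exist in the form you state. Second, your first horn is coarser than clause~(1) of $\varphi_0(\Fcal)$: that clause allows the witness $A$ to vary with $\xi$ and requires $A\in\Fcal\cap N_\xi$ for the specific countable set $N_\xi$, not merely $A\in\Fcal\cap M$; downward closure of the $A$'s lets one partially reconcile these, but then the negation of your first horn (``no single cofinally absorbing $A$'') is not the correct hypothesis under which the second horn must be proved. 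In short, the architecture is the right one and matches \cite{linear_phi}, but the one lemma that makes it work is asserted rather than proved, so the proposal does not constitute a proof.
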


\begin{remark}
If we want to force $\varphi$ it is natural to start with
an inaccessible cardinal $\kappa$ and do a countable support
iteration of proper forcing notions
$({\mathcal P}_{\alpha},\dot{\mathcal Q}_\beta; \alpha \leq \kappa, \beta < \kappa)$.
At stage $\alpha$ we can use $\diamond_\kappa$ to guess
an Aronszajn tree $\dot{T}_\alpha$ and a family $\dot{{\mathscr F}}_\alpha$
of subtrees of $\dot{T}_\alpha$ in the model $V^{{\mathcal P}_{\alpha}}$
and let $\dot{{\mathcal Q}_{\alpha}}$ be a ${\mathcal P}_\alpha$-name
for the proper poset which forces $\varphi_0(\dot{\mathscr F}_\alpha)$.
Suppose in the final model  $V^{\mathcal P_\kappa}$ we have an Aronszajn tree $\dot{T}$
and a family $\dot{\mathscr F}\in V^{{\mathcal P}_\kappa}$ of subtrees
of $\dot{T}$. In order to ensure that $\varphi_0(\dot{\mathscr F})$ holds
in $V^{\mathcal P_\kappa}$ we need to find a stage $\alpha$ of the iteration
at which $\dot{T}$ and $\dot{\mathscr F}$ are guessed, i.e. $\dot{T}_\alpha=\dot{T}$
and $\dot{\mathscr F}\restriction V^{{\mathcal P}_\alpha}=\dot{\mathscr F}_\alpha$
{\em and} moreover such that
$$
(\dot{\mathscr F}^\perp)^{V^{\mathcal P_\kappa}}\restriction V^{\mathcal P_\alpha} =
(\dot{\mathscr F}^{\perp}_{\alpha})^{V^{{\mathcal P}_\alpha}}.
$$
This is the reason why a Mahlo cardinal is used in the following
theorem from \cite{linear_phi}.
\end{remark}

\begin{teo}
If there is a cardinal which is both reflecting and Mahlo, then there is a
proper forcing extension of $L$ which satisfies the conjunction
of $\BPFA$ and $\varphi$.
In particular the forcing extension satisfies that the uncountable
linear orders have a five element basis.
\qed
\end{teo}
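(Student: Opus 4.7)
The plan is to adapt the standard Goldstern--Shelah template for forcing $\BPFA$ and interleave it with applications of Proposition~\ref{force_phi(F)}. Working in $L$, let $\kappa$ be reflecting and Mahlo, and fix a $\diamondsuit_\kappa$-sequence that, at each stage $\beta < \kappa$, guesses either a name for a proper poset of cardinality $\leq \omega_1$ (for $\BPFA$ bookkeeping) or a name for a pair $(\dot{T}_\beta, \dot{\mathcal{F}}_\beta)$ consisting of an Aronszajn tree and a family of its subtrees (for $\varphi$ bookkeeping). Define a countable support iteration
\[
\langle \mathcal{P}_\alpha, \dot{\mathcal{Q}}_\beta : \alpha \leq \kappa,\,\beta < \kappa\rangle
\]
in which $\dot{\mathcal{Q}}_\beta$ is either the proper forcing guessed by the $\BPFA$ bookkeeping or the proper forcing of Proposition~\ref{force_phi(F)} applied to $(\dot{T}_\beta, \dot{\mathcal{F}}_\beta)$. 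Each iterand is proper of hereditary size $<\kappa$, so countable support preservation gives properness of $\mathcal{P}_\kappa$, and inaccessibility of $\kappa$ yields the $\kappa$-chain condition. Hence $\kappa = \omega_2$ in $V^{\mathcal{P}_\kappa}$.

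The verification of $\BPFA$ is routine and uses only the reflecting property of $\kappa$: any $\Sigma_1$ assertion over $H_{\omega_2}^{V^{\mathcal{P}_\kappa}}$ forceable by a proper poset of size $\omega_1$ is reflected to some intermediate stage, where the bookkeeping supplies such a poset and the generic meets the prescribed dense sets.

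The delicate step is $\varphi$, and this is where Mahloness enters, exactly as anticipated in the Remark. Given a pair $(T, \mathcal{F})$ in the final extension, we need a stage $\beta < \kappa$ at which $\dot{T}_\beta = T$ and $\dot{\mathcal{F}}_\beta = \mathcal{F}\cap V^{\mathcal{P}_\beta}$ are guessed correctly and, crucially,
\[
(\mathcal{F}^\perp)^{V^{\mathcal{P}_\kappa}} \cap V^{\mathcal{P}_\beta} = (\dot{\mathcal{F}}_\beta^\perp)^{V^{\mathcal{P}_\beta}},
\]
so that the continuous chain $\langle N_\nu : \nu \in E\rangle$ of subsets of $\dot{\mathcal{F}}_\beta \cup \dot{\mathcal{F}}_\beta^\perp$ added by $\dot{\mathcal{Q}}_\beta$ to witness $\varphi_0(\dot{\mathcal{F}}_\beta)$ remains a witness of $\varphi_0(\mathcal{F})$ in the final model. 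The main obstacle is the potential failure of this orthogonality equation: a subtree $B \in V^{\mathcal{P}_\beta}$ may lie in $\dot{\mathcal{F}}_\beta^\perp$ yet have uncountable intersection with some $A \in \mathcal{F}\setminus V^{\mathcal{P}_\beta}$ added later. Mahloness of $\kappa$ supplies a stationary set of inaccessible $\beta < \kappa$ at which $V^{\mathcal{P}_\beta}$ reflects the relevant $\Pi_1$ statements of $V^{\mathcal{P}_\kappa}$; combining this with $\diamondsuit_\kappa$-catching produces a $\beta$ at which both the guessing and the orthogonality agreement hold, completing the verification of $\varphi$.

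The ``in particular'' clause is precisely the content of \S2--\S4 of the present paper: $\BPFA + \varphi$ implies $\CAT$, and $\CAT$ together with $\BPFA$ yields the five element basis for the uncountable linear orders by the results of Baumgartner~\cite{aleph_1-dense} and Abraham--Shelah~\cite{club_isomorphic}.
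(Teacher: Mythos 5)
Your outline follows exactly the strategy the paper itself sketches in the Remark preceding this theorem (the paper does not prove the result but cites \cite{linear_phi} for it): a countable support iteration of length a reflecting Mahlo $\kappa$, interleaving the Goldstern--Shelah bookkeeping for $\BPFA$ with the posets supplied by Proposition~\ref{force_phi(F)}, with Mahloness invoked to find a stage at which the computation of $\mathcal{F}^\perp$ stabilizes. The one step you assert rather than argue --- that stationarily many inaccessibles yield a stage $\beta$ with $(\mathcal{F}^\perp)^{V^{\mathcal{P}_\kappa}}\cap V^{\mathcal{P}_\beta}=(\dot{\mathcal{F}}_\beta^\perp)^{V^{\mathcal{P}_\beta}}$ --- is the technical heart of the proof in \cite{linear_phi}, but at the level of detail the paper itself offers for this theorem, your account coincides with it.
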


If one is interested only in the consistency of the existence of a five element
basis for the  uncountable linear orderings  it was observed in \cite{linear_phi}
then a somewhat smaller large cardinal is sufficient. Indeed, for the desired
conclusion one does not need the full strength of $\BPFA$ and one
only needs $\varphi_0(\mathscr F)$ for certain families of subtrees
of an Aronszajn tree $T$ which are $\Sigma_1$-definable using
a subset of $\omega_1$ as a parameter. The precise large cardinal assumption
is that there is an inaccessible cardinal $\kappa$ such that
for every $\kappa_0 < \kappa$, there is an inaccessible
cardinal $\delta < \kappa$
such that $\kappa_0$ is in $H(\delta)$ and $H(\delta)$ satisfies
there are two reflecting cardinals greater than
$\kappa_0$.

\section{Colorings of Aronszajn trees}

Let $2^{<\omega_1}$ denote the full binary tree of height $\omega_1$ with the usual ordering.
For the remainder of the paper we fix an Aronszajn
tree $T\subseteq 2^{<\omega_1}$ which is special, coherent, 
and closed under finite modifications.
The tree $T(\varrho_3)$ from  \cite{cseq} is such an example.
Recall that  $T$ is {\em special} if it can be written as $T=\bigcup_n A_n$,
where $A_n$ is an antichain, for all $n$. Notice that this implies
that any uncountable subset of $T$ contains an uncountable antichain. 
We will often use this fact without mentioning it.
Since $T$ is a subtree of $2^{<\omega_1}$,  the $\alpha$-th level of $T$, 
i.e. $T_\alpha$, is simply $T\cap 2^{\alpha}$. 
We say that $T$ is {\em coherent} if, for every countable $\alpha$ and $s,t\in T_\alpha$,
the set
\[
D(s,t)=\{ \xi <\alpha : s(\xi)\neq t(\xi)\}
\]
is finite. Finally, $T$ is {\em closed under finite modifications} if, for every
$\alpha$ and $s,t\in 2^\alpha$, if $s\in T$ and $D(s,t)$ is finite then $t\in T$. 
For $A\subseteq \omega_1$ we set $T\restriction A=\bigcup_{\alpha \in A}T_\alpha$.
If $s$ and $t$ are incomparable nodes in $T$, i.e. if $D(s,t)$ is
non-empty,  we let
$$
\Delta(s,t) =\min D(s,t).
$$
Since $T$ is a subtree of $2^{<\omega_1}$ which is normal it is itself normal. 
If $s,t\in T$ then the meet of $s$ and $t$, i.e. $s\meet t$, is simply
$s\restriction \Delta(s,t)$. Given a subset $X$
of $T$ we let
$$
\wedge(X) = \{ s\wedge t: s,t \in X, s \mbox{ and } t \mbox{ incomparable}\}.
$$
Note that if $T_X$ is the tree induced by $X$, i.e. the set of
all initial segments of elements of $X$, then $\meet(T_X)=\meet(X)$.
We also let
$$
\pi(X)=\{ t\restriction \Ht(s): s,t\in X \mbox{ and } \Ht(s)\leq \Ht(t) \}.
$$
We let $\lev(X)=\{ \Ht(t): t\in X\}$. If $\alpha \in \lev(X)$ we let
$\pi_\alpha(X)=\pi(X)\cap T_\alpha$.

We will also need to consider finite powers of our tree $T$.
Given an integer $n$  and a level $T_\alpha$ of $T$ we let

$$T^{[n]}_\alpha=\{ \tau \in T_\alpha^n: i <j \rightarrow \tau(i)\leq_{\rm lex}\tau(j)\}
$$
where $\leq_{\rm lex}$ denotes the lexicographic ordering of $T$.
We let $T^{[n]}=\bigcup_{\alpha}T^{[n]}_\alpha$.
Morally, elements of $T^{[n]}$ are $n$-element subsets of $T$
of the same height.
In order to ensure that $T^{[n]}$ is closed under taking restrictions,
it is necessary to allow for $n$-element sets with repetitions,
i.e. {\em multisets}, and the above definition is a formal way to accommodate this.
We will abuse notation and identify elements of $T^{[n]}$ that have
distinct coordinates with the set of their coordinates.
In our arguments, only the range of these sequences
will be relevant.

If $\sigma \in T_\alpha^{[n]}$ and $\tau \in T_\alpha^{[m]}$,
for some $\alpha$, then, by abusing notation, we will write
$\sigma \cup \tau$ is the sequence of length $n+m$ which
enumerates the coordinates of $\sigma$ and $\tau$ in
$\leq_{\rm lex}$-increasing order including repetitions.
We will also write $\sigma \subseteq \tau$ if the multiset enumerated by $\sigma$ 
is included in the multiset enumerated by $\tau$ by counting multiplicities. 
$T^{[n]}$ will be considered as a tree with the coordinate-wise
partial order induced by $T$. If $\sigma \in T^{[n]}$ and
$\alpha < \Ht(\sigma)$ we write $\sigma \restriction \alpha$
for the sequence $(\sigma(i)\restriction \alpha : i<n)$.
If $\sigma,\tau \in T^{[n]}$ are incomparable we will let
$$
\Delta(\sigma,\tau) =\min \{ \alpha : \sigma(i)(\alpha)\neq \tau(i)(\alpha),
\mbox{ for some } i <n\}
$$
and we will write $\sigma \meet \tau$ for $\sigma \restriction \Delta(\sigma,\tau)$.

For $\sigma \in T^{[n]}$ let
$$
D_\sigma = ( D(\sigma(i),\sigma(0)): i <n)
$$
Suppose  $\sigma, \tau \in T^{[n]}$ and $\Ht(\sigma)\leq \Ht(\tau)$.
We say that the pair $\{\sigma, \tau\}$ is {\em regular}
if $ D_\tau\restriction \Ht(\sigma) = D_\sigma$, i.e. for all $i<n$,
$$
D(\tau(i),\tau(0))\cap \Ht(\sigma) = D(\sigma(i),\sigma(0)).
$$
Note that in this case, for all $i,j<n$,
$$
\Delta(\tau(i),\sigma(i))=\Delta(\tau(j),\sigma(j)).
$$
We say that a subset $X$ of $T^{[n]}$ is {\em regular}
if every pair of elements of $X$ is regular.
Note that if $X$ is regular, then so is the tree $T_X$
generated by $X$.
A {\em level sequence} of $T^{[n]}$ is a sequence
$\{ \sigma_\alpha: \alpha \in A\}$ where $A$ is
a subset of $\omega_1$ and $\sigma_\alpha \in T^{[n]}_\alpha$, for
all $\alpha \in A$.
The following is a simple application
of the $\Delta$-system lemma and the Pressing Down Lemma.
\begin{fatto}
\label{delta}
Let $\mathcal A=\{ \sigma_\alpha: \alpha \in A\}$
be a level sequence. If $A$ is uncountable (stationary)
then there is an uncountable (stationary) subset $B$
of $A$ such that $\mathcal B= \{ \sigma_\alpha: \alpha \in B\}$
is regular.
\qed
\end{fatto}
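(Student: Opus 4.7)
The plan is to stabilize the sequences $d_{\alpha,i}:=D(\sigma_\alpha(i),\sigma_\alpha(0))$ on a large subfamily of $\mathcal{A}$; by coherence of $T$ each $d_{\alpha,i}$ is a finite subset of $\alpha$. Writing $F_\alpha:=\bigcup_{i<n} d_{\alpha,i}$, the key observation is that regularity of the pair $\{\sigma_\alpha,\sigma_\beta\}$ with $\alpha<\beta$ is exactly the condition $d_{\beta,i}\cap\alpha=d_{\alpha,i}$ for every $i<n$, and this will follow automatically if, on the selected subfamily $B$, each $d_{\alpha,i}$ takes a common value $d^*_i\subseteq\alpha$, since then $d_{\beta,i}\cap\alpha=d^*_i\cap\alpha=d^*_i=d_{\alpha,i}$.

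In the stationary case, I would apply Fodor's Pressing Down Lemma to the map $\alpha\mapsto\max(F_\alpha\cup\{0\})$, which is regressive on the stationary set $A\cap\mathrm{Lim}(\omega_1)$, to obtain a stationary $A_1\subseteq A$ and a countable ordinal $\gamma$ with $F_\alpha\subseteq\gamma$ for every $\alpha\in A_1$. Since there are only countably many sequences of finite subsets of $\gamma$, a pigeonhole step refines $A_1$ to a stationary $A_2$ on which $(d_{\alpha,i})_{i<n}$ takes a common value $(d^*_i)_{i<n}$. Then $B:=A_2\setminus(\gamma+1)$ is the required stationary regular subfamily.

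For the uncountable case, Fodor is unavailable, so I would instead apply the $\Delta$-system lemma to $\{F_\alpha:\alpha\in A\}$ to obtain an uncountable $A_1\subseteq A$ and a finite root $r$ with $F_\alpha\cap F_\beta=r$ for distinct $\alpha,\beta\in A_1$. The main obstacle is then to pass to an uncountable subset on which $F_\alpha=r$: the $\Delta$-system identity, combined with the equality $F_\beta\cap\alpha=F_\alpha$ demanded by regularity, forces each selected $F_\alpha$ to coincide with $r$. Once this reduction is carried out, a pigeonhole on the finitely many possible sequences $(d_{\alpha,i})_{i<n}$ with $d_{\alpha,i}\subseteq r$ produces a constant sequence on an uncountable $B$, and regularity then follows just as in the stationary case.
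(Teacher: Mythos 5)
Your stationary case is correct and is the intended argument: since $F_\alpha=\bigcup_{i<n}D(\sigma_\alpha(i),\sigma_\alpha(0))$ is a finite subset of $\alpha$, pressing down on $\alpha\mapsto\max(F_\alpha\cup\{0\})$ bounds all the $F_\alpha$ below a fixed $\gamma$ on a stationary set, a countable pigeonhole then makes the tuple $\langle D(\sigma_\alpha(i),\sigma_\alpha(0)):i<n\rangle$ constant on a stationary set, and constancy of $D_{\sigma_\alpha}$ gives regularity exactly as in your key observation (note that the final deletion of $A_2\cap(\gamma+1)$ is not even needed, since $d^*_i=d_{\alpha,i}\subseteq\alpha$ automatically).

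The uncountable case, however, is not proved. The step you yourself flag as ``the main obstacle'' --- passing to an uncountable subset on which $F_\alpha$ equals the root $r$ --- is precisely the missing content, and the justification offered is circular: you argue that \emph{if} the chosen subfamily is regular \emph{then} the $\Delta$-system identity forces $F_\alpha=r$, which uses the desired conclusion rather than producing the set. Moreover this reduction is impossible in general, and in fact the uncountable half of the statement fails as literally written. Take $n=2$ and let $A$ be the set of countable successor ordinals above $\omega$; pick $u_\alpha\in T_\alpha$, let $v_\alpha$ be $u_\alpha$ with the bit at $\alpha-1$ flipped (legitimate since $T$ is closed under finite modifications), and let $\sigma_\alpha$ enumerate $\{u_\alpha,v_\alpha\}$. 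Then $D(\sigma_\alpha(1),\sigma_\alpha(0))=\{\alpha-1\}$, and for $\alpha<\beta$ in $A$ we get $D(\sigma_\beta(1),\sigma_\beta(0))\cap\alpha=\{\beta-1\}\cap\alpha=\emptyset\neq\{\alpha-1\}$, so no pair whatsoever is regular; here the $\Delta$-system root is empty and no $F_\alpha$ coincides with it. The underlying point is that along any uncountable regular family the sets $D(\sigma_\alpha(i),\sigma_\alpha(0))$ form a $\subseteq$-increasing $\omega_1$-chain of finite sets and hence are eventually constant, so an uncountable regular subfamily exists if and only if $D_{\sigma_\alpha}$ is already constant on an uncountable subset of $A$ --- a condition that Fodor guarantees when $A$ is stationary but that can fail for $A$ merely uncountable and non-stationary. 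Thus your argument establishes the half of the Fact that is actually true (and the half that the paper's later shrinking arguments rely on); the purely uncountable half should be read with the additional hypothesis that $D_{\sigma_\alpha}$ stabilizes on an uncountable set, as it does for the uncountable families arising in the paper, which are restrictions of families already known to be regular.
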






From now on we assume the conjunction of $\BPFA$ and $\varphi$.
We are given a subset $K$ of $T$ and we want to find an uncountable
antichain $X$ in $T$ such that $\meet(X)\subseteq K$ or $\meet(X)\cap K=\emptyset$.
We will refer to $K$ as a {\em coloring} of $T$.
We first note that, for every integer $n$, $K$ induces a coloring $K^{[n]}$ of $T^{[n]}$
 defined by
$$
K^{[n]}=K^n\cap T^{[n]}.
$$
We let $\mathscr F_n$ be the collection of regular subtrees $R$ of
$T^{[n]}$ such that $\meet(R)\cap K^{[n]}=\emptyset$.
The following fact is immediate by using Fact~\ref{delta}.

\begin{fatto}\label{uncountableX}
If  $R \in \Fcal_n^{\bot}$ then for every uncountable $X \subseteq R$ there are
incomparable $\sigma, \tau \in X$ such that $\sigma \meet \tau \in K^{[n]}$.

\end{fatto}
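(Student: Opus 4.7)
The plan is to argue by contradiction. Suppose there is an uncountable antichain $X\subseteq R$ such that $\sigma\meet\tau\notin K^{[n]}$ for every pair $\sigma,\tau\in X$. We want to produce an element of $\Fcal_n$ sitting inside $R$ on an uncountable set, contradicting $R\in\Fcal_n^{\perp}$.

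First I would thin $X$ to a level sequence. Since each level $T^{[n]}_\alpha$ is countable, at most countably many elements of $X$ share any given height; removing these duplicates we may assume that the map $\sigma\mapsto\Ht(\sigma)$ is injective on $X$, so $X=\{\sigma_\alpha:\alpha\in A\}$ with $\sigma_\alpha\in T^{[n]}_\alpha$ and $A\subseteq\omega_1$ uncountable. Then Fact~\ref{delta} furnishes an uncountable $B\subseteq A$ such that $X'=\{\sigma_\alpha:\alpha\in B\}$ is regular.

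Now let $R'=R_{X'}$ be the downward closure of $X'$ in $T^{[n]}$; since $X'$ is regular, so is $R'$, and $R'$ is uncountable because it contains the uncountable antichain $X'$. Moreover $R'\subseteq R$ because $R$ is a subtree (downward closed) and $X'\subseteq X\subseteq R$. Using the observation from \S 2 that $\meet(R_X)=\meet(X)$, we have $\meet(R')=\meet(X')$. The hypothesis on $X$ transfers to $X'$, giving $\meet(X')\cap K^{[n]}=\emptyset$, hence $\meet(R')\cap K^{[n]}=\emptyset$, so $R'\in\Fcal_n$. But then $R\cap R'=R'$ is uncountable, contradicting $R\in\Fcal_n^{\perp}$.

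There is no real obstacle here; the one small point to be careful about is verifying that pushing the assumption from pairs in $X$ to pairs in the generated tree $R'$ is legitimate, which is exactly what the identity $\meet(R_X)=\meet(X)$ noted just after the definition of $\wedge(X)$ provides. Everything else is a direct application of Fact~\ref{delta} and the definitions of $\Fcal_n$ and $\Fcal_n^\perp$.
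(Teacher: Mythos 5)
Your argument is correct and is exactly the argument the paper intends when it says the fact "is immediate by using Fact~\ref{delta}": thin the antichain to an uncountable regular level sequence, observe that the generated subtree lies in $R$ and, via $\meet(R_X)=\meet(X)$, would belong to $\Fcal_n$ if all meets avoided $K^{[n]}$, contradicting $R\in\Fcal_n^{\perp}$. No issues.
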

\begin{proof}
	We may assume that $X$ is a level sequence, say $X=\{ \sigma_\alpha : \alpha \in A\}$, for some $A\subseteq \omega_1$. 
	Since $T^{[n]}$ is special, by shirking $X$ if necessary, we may assume
	that it is an antichain. By Fact~\ref{delta} we may further assume that $X$ is regular.
	Now, if $\wedge(X)$ were disjoint from $K^{[n]}$, then so would be $\wedge(T_X)$.
	But then $T_X$ would belong to $\Fcal_n$.
	However, $T_X$ is a subtree of $R$ which is orthogonal to all trees in $\Fcal_n$,
	a contradiction.
\end{proof}
By $\varphi_0(\mathscr F_n)$ we can find a club $C_n$ in $\omega_1$
and a continuous increasing chain $(N^n_\xi: \xi \in C_n)$
of countable subsets of $\mathscr F_n \cup (\mathscr F_n)^{\perp}$
witnessing $\varphi_0(\mathscr F_n)$. By replacing each of the $C_n$
by their intersection we may assume that the $C_n$ are all the same
and equal to say $C$. We now define a new coloring of $T^{[n]}\restriction C$
as follows.

\begin{defi}
A node $\sigma \in T^{[n]}\restriction C$ is in $K^{[n]}_\varphi$
if, letting $\alpha$ be the height of $\sigma$, there exists
$R\in (\mathscr F_n)^{\perp} \cap N^n_\alpha$ such that $\sigma \in R$,
i.e. if $\sigma$ is in case (2) of the dichotomy for $\varphi_0(\mathscr F_n)$.
We denote $(T^{[n]}\restriction C)\setminus K^{[n]}_\varphi$ by
$L^{[n]}_\varphi$. We let $K_\varphi =\bigcup_n K^{[n]}_\varphi$ and
$L_\varphi =\bigcup_n L^{[n]}_\varphi$.
\end{defi}

\begin{remark} The induced coloring $T^{[n]}=K^{[n]}_\varphi \cup L^{[n]}_\varphi$,
for $n<\omega$,
is our analog of the notions of {\em acceptance} and {\em rejection}
from \cite{linear_basis}. The main difference is that these notions
are defined in \cite{linear_basis} relative to a given countable elementary submodel
of $H(\omega_2)$ whereas our colorings do not make
reference to any such model. This simplifies considerably
the proof of properness of the main forcing notion we define
in \S 3.
\end{remark}

We now note some useful facts about these induced colorings.

\begin{fatto}\label{countableB}
If there is a node $t$ in $L^{[1]}_\varphi$ whose height is a limit
point of $C$ then there is an uncountable antichain $X$ in $T$ such that
$\meet(X)\cap K=\emptyset$.
\end{fatto}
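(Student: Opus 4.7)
The plan is to unpack the definition of $L^{[1]}_\varphi$. Since $t \in L^{[1]}_\varphi$, clause (2) of the dichotomy in $\varphi_0(\mathcal F_1)$ must fail for $t$, so clause (1) must hold: there is an ordinal $\nu_t < \alpha := \Ht(t)$ such that for every $\xi \in (\nu_t,\alpha) \cap C$ there exists some $A \in \mathcal F_1 \cap N^1_\xi$ with $t \restriction \xi \in A$. Because $\alpha$ is assumed to be a limit point of $C$, the interval $(\nu_t,\alpha) \cap C$ is non-empty, so we may fix such a $\xi$ and a corresponding $A$. (This is the only place the limit-point hypothesis is used: it ensures clause (1) has any content at all.)

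Next, I would observe that in the case $n=1$ the regularity condition is vacuous and $K^{[1]} = K$, so the membership $A \in \mathcal F_1$ unwinds to the statement that $A$ is an (uncountable, downward-closed) subtree of $T$ whose pairwise meets avoid $K$; that is, $\meet(A) \cap K = \emptyset$. Thus $A$ already supplies an uncountable subfamily of $T$ with the desired property on meets, and all that remains is to thin $A$ to an antichain.

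This is where I would invoke that $T$ is \emph{special}: fix a partition of $T$ into countably many antichains and intersect each with $A$. Since $A$ is uncountable, at least one of these intersections, call it $X$, is an uncountable antichain contained in $A$. Because $A$ is downward closed in $T$, for any incomparable $s,t \in X \subseteq A$ the meet $s \meet t$ is an initial segment of $s$, hence lies in $A$; therefore $\meet(X) \subseteq \meet(A)$, so
\[
\meet(X) \cap K \;\subseteq\; \meet(A) \cap K \;=\; \emptyset,
\]
which is precisely the statement that $X$ is homogeneous for $T \setminus K$.

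I do not expect any real obstacle here; the proof is essentially a one-line consequence of the dichotomy in $\varphi_0(\mathcal F_1)$, once one recognises that any member of $\mathcal F_1$ is already, by definition, an uncountable downward-closed witness whose meets avoid $K$, and that specialness of $T$ lets us replace such a witness by an uncountable antichain without altering the set of meets relevant to $K$.
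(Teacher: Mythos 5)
Your argument is correct and follows the paper's proof essentially verbatim: membership in $L^{[1]}_\varphi$ forces case (1) of the $\varphi_0(\mathcal F_1)$ dichotomy, the limit-point hypothesis makes $(\nu_t,\alpha)\cap C$ nonempty and hence produces a member $A$ of $\mathcal F_1$, and any uncountable antichain inside $A$ works since $\meet(X)\subseteq\meet(A)$. The only (immaterial) difference is that you extract the antichain via specialness of $T$, whereas the paper just cites the fact that every uncountable subtree of an Aronszajn tree contains an uncountable antichain.
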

\begin{proof}
Assume $t$ is such a node and let $\alpha$ be the height of $t$.
By our assumption, case (1) of the dichotomy for $\varphi_0(\mathscr F_1)$
holds for $t$. Therefore, there exists $\eta <\alpha$ such that for
every $\xi \in (\eta, \alpha)\cap C$ there is a $R \in \Fcal_1 \cap N_{\xi}^1$
such that $t\restriction \xi \in R$.
Since $(\eta,\alpha)\cap C$ is non-empty, it follows that $\mathscr F_1$
is non-empty, as well. Now, let $R$ be a member of $\mathscr F_1$.
Identifying $R$ with a  subtree of $T$ we have that  $\wedge (R) \cap K=\emptyset$.
Since $T$ is special, so is $R$ and we can fix an uncountable antichain
$X$ in $R$.  It follows that $\wedge (X)\cap K=\emptyset$,
as required. 
\end{proof}


\begin{fatto}\label{cont}
If $\sigma \in K_\varphi$  and $\alpha=\Ht(\sigma)$
is a limit point of $C$ then there
is $\eta <\alpha$ such that  $\sigma \restriction \xi \in K_\varphi$,
for all  $\xi \in (\eta,\alpha)\cap C$. Similarly for $L_\varphi$.
We refer to this property as {\em continuity} of the induced coloring.
\qed
\end{fatto}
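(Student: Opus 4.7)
The plan is to handle the two halves of the statement in parallel. In both cases I would exploit three ingredients: (i) the continuity of the chain $\langle N^n_\xi:\xi\in C\rangle$ at the limit point $\alpha$, which gives $N^n_\alpha=\bigcup_{\xi\in C\cap\alpha}N^n_\xi$; (ii) its monotonicity, $\xi<\xi'$ implies $N^n_\xi\subseteq N^n_{\xi'}$; and (iii) the downward closure of the subtrees in $\mathcal F_n\cup(\mathcal F_n)^{\perp}$ inside $T^{[n]}$.

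For the $K_\varphi$ clause, I would fix an $n$ with $\sigma\in K^{[n]}_\varphi$ and pick a witness $R\in(\mathcal F_n)^{\perp}\cap N^n_\alpha$ with $\sigma\in R$. By (i) there is $\eta\in C\cap\alpha$ with $R\in N^n_\eta$; by (ii) it follows that $R\in N^n_\xi$ for every $\xi\in(\eta,\alpha)\cap C$; and by (iii) downward closure yields $\sigma\upharpoonright\xi\in R$ for each such $\xi$. So $R$ continues to witness $\sigma\upharpoonright\xi\in K^{[n]}_\varphi$, and any such $\eta$ is as required.

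For the $L_\varphi$ clause I would argue by contradiction. Fix $n$ with $\sigma\in L^{[n]}_\varphi$ and suppose the set of $\xi\in C\cap\alpha$ for which $\sigma\upharpoonright\xi\in K^{[n]}_\varphi$ is cofinal in $\alpha$; along these $\xi$ choose $R_\xi\in(\mathcal F_n)^{\perp}\cap N^n_\xi$ with $\sigma\upharpoonright\xi\in R_\xi$. Since $N^n_\alpha$ is countable while the chosen $\xi$'s are cofinally many, a pigeonhole argument produces a single $R^{*}\in(\mathcal F_n)^{\perp}\cap N^n_\alpha$ that occurs as the witness along a cofinal sequence $\xi_k\nearrow\alpha$. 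Downward closure of $R^{*}$ then forces $\sigma\upharpoonright\xi\in R^{*}$ for every $\xi<\alpha$. The step I expect to require the most care is the passage to the limit: concluding $\sigma\in R^{*}$ from the fact that every proper initial segment of $\sigma$ lies in $R^{*}$, using that a downward closed subtree of $T^{[n]}$ which contains a cofinal chain of initial segments of $\sigma$ contains $\sigma$ itself --- the level-closure implicit in the paper's use of ``subtree'' for uncountable downward closed subsets of a coherent Aronszajn tree. Once this is granted, $R^{*}$ witnesses $\sigma\in K^{[n]}_\varphi$, contradicting $\sigma\in L^{[n]}_\varphi$, and so the least $\eta$ above which no such $\xi$ exists is as required.
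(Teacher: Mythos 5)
Your argument for the $K_\varphi$ clause is correct and is surely the intended one: continuity of the chain puts the witness $R\in(\mathcal F_n)^{\perp}\cap N^n_\alpha$ into some $N^n_\eta$ with $\eta\in C\cap\alpha$, monotonicity keeps it in every later $N^n_\xi$, and downward closure of $R$ gives $\sigma\restriction\xi\in R$.

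The $L_\varphi$ clause is where the content lies, and your argument for it has two genuine gaps. First, the pigeonhole step fails: $\alpha$ is a countable ordinal, so the set of $\xi\in C\cap\alpha$ with $\sigma\restriction\xi\in K^{[n]}_\varphi$ is only countably infinite, and an infinite sequence of witnesses $R_\xi$ drawn from the countable set $N^n_\alpha$ need not take any single value cofinally often --- the $R_\xi$ may be pairwise distinct. Pigeonhole into countably many classes only yields a large class when the index set is uncountable. Second, even granting a single $R^*$ with $\sigma\restriction\xi_k\in R^*$ for some $\xi_k\nearrow\alpha$, you cannot conclude $\sigma\in R^*$: the paper defines a subtree to be an uncountable \emph{downward closed} set, with no closure under limits of chains, and a downward closed set can contain every proper initial segment of $\sigma$ without containing $\sigma$ (delete the cone above $\sigma$ from any subtree through $\sigma$). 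The ``level-closure'' you appeal to is not part of the definition. The underlying problem is that your argument never invokes the first alternative of the dichotomy in $\varphi_0(\mathcal F_n)$, which is the essential ingredient for this half: $\sigma\in L^{[n]}_\varphi$ means alternative (2) fails for $\sigma$, so alternative (1) holds and already supplies the required ordinal $\nu_\sigma<\alpha$, and for $\xi\in(\nu_\sigma,\alpha)\cap C$ alternative (1) persists for $\sigma\restriction\xi$ with the same bound because $(\sigma\restriction\xi)\restriction\zeta=\sigma\restriction\zeta$. This is exactly how membership in $L_\varphi$ is unpacked in the proof of Lemma~\ref{mainlemma}, and it is the mechanism the continuity fact rests on; an argument that manipulates only the case-(2) witnesses of the restrictions, as yours does, cannot succeed because those witnesses need not cohere as $\xi$ varies.
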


\begin{fatto}\label{gen}
Suppose $S$ is a stationary subset of $C$ and
${\mathcal S} =\{  \sigma_{\xi} : \xi \in S \}$ is a level sequence in $T^{[n]}$
consisting of elements of $K^{[n]}_\varphi$.
Then there exist distinct $\xi,\eta \in S$
such that $\sigma_\xi \meet \sigma_\eta \in K^{[n]}$.
\end{fatto}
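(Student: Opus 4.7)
I will argue by contradiction. Suppose that $\sigma_\xi \meet \sigma_\eta \notin K^{[n]}$ for all distinct $\xi,\eta \in S$. Applying Fact~\ref{delta}, thin $S$ to a stationary $S' \subseteq S$ on which $\{\sigma_\xi : \xi \in S'\}$ is regular, and let $R$ be the subtree of $T^{[n]}$ generated by this family, i.e. its downward closure. The remark immediately after Fact~\ref{delta} shows that $R$ is still regular, while the identity $\meet(R_X)=\meet(X)$ noted earlier implies that every meet occurring in $R$ has the form $\sigma_\xi \meet \sigma_\eta$ with distinct $\xi,\eta \in S'$; by hypothesis no such meet lies in $K^{[n]}$. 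Since $R$ is uncountable (it contains the uncountable level sequence $\{\sigma_\xi : \xi \in S'\}$), this places $R$ in $\mathcal{F}_n$.

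Next I exploit the hypothesis $\sigma_\xi \in K^{[n]}_\varphi$: for each $\xi \in S'$, choose $R_\xi \in \mathcal{F}_n^\perp \cap N^n_\xi$ with $\sigma_\xi \in R_\xi$. Intersecting $S'$ with the club of limit points of $C$ and using continuity of the chain $\langle N^n_\eta : \eta \in C\rangle$, every such $R_\xi$ already belongs to $N^n_{\eta(\xi)}$ for some $\eta(\xi) \in C \cap \xi$. Fodor's lemma stabilizes $\eta(\xi)$ to a fixed $\eta^\ast$ on a stationary subset, and the countability of $N^n_{\eta^\ast}$ then yields a single $R^\ast \in \mathcal{F}_n^\perp \cap N^n_{\eta^\ast}$ together with a stationary $S^\ast \subseteq S'$ for which $R_\xi = R^\ast$ for every $\xi \in S^\ast$.

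The contradiction is then immediate: the $\sigma_\xi$ for $\xi \in S^\ast$ have pairwise distinct heights, so $\{\sigma_\xi : \xi \in S^\ast\}$ is an uncountable subset of $R \cap R^\ast$, whereas $R \in \mathcal{F}_n$ and $R^\ast \in \mathcal{F}_n^\perp$ force $R \cap R^\ast$ to be countable. The only delicate point is the first step, where one must verify both that the downward closure of a regular family is again regular and that its meets are exactly the pairwise meets of the generating level sequence, so that the contradiction hypothesis suffices to place $R$ in $\mathcal{F}_n$; both observations are already recorded in the paragraph surrounding Fact~\ref{delta}, so the step goes through smoothly. The Fodor-style shrinking in the second step and the concluding contradiction are entirely routine.
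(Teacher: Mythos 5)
Your argument is correct and follows essentially the same route as the paper: regularize $\mathcal{S}$ via Fact~\ref{delta}, extract the witnessing trees $R_\xi\in\mathcal{F}_n^\perp\cap N^n_\xi$ from the definition of $K^{[n]}_\varphi$, stabilize them to a single $R^*$ by a pressing-down argument, and conclude from orthogonality. The only cosmetic difference is that you unfold the final step (forming the generated subtree of the level sequence and noting it would lie in $\mathcal{F}_n$, contradicting $|R\cap R^*|\le\aleph_0$) where the paper simply cites Fact~\ref{uncountableX}, whose proof is exactly this argument.
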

\begin{proof}
For $\xi \in S$ since $\sigma_\xi \in K^{[n]}_\varphi$
there exists a tree $R_\xi \in N_\xi^n\cap ({\mathscr F}_n)^{\perp}$
such that $\sigma_\xi \in R_\xi$. Since $(N_\xi^n: \xi \in C)$ is
a continuous increasing sequence of countable sets, we can apply
the Pressing Down Lemma to the function $\xi \mapsto R_\xi$. 
Hence, by shrinking $S$ if necessary,  we may assume that the trees
$R_{\xi}$ are all the same and equal to some $R$.
This simply means that ${\mathcal S} \subseteq R$.
Since $R\in (\mathscr F_n)^{\perp}$, by Fact \ref{uncountableX},
there are $\xi \neq \eta \in S$ such that  $\sigma_{\eta} \land \sigma_{\xi} \in K^{[n]}$,
as required.
\end{proof}

\begin{defi} Let ${\mathcal S} =\{ \sigma_\xi: \xi \in A\}$ be a regular level sequence
in $T^{[n]}$, for some integer $n$. Then ${\mathcal P}_{\mathcal S}$ is the poset consisting of
finite subsets $p$ of ${\mathcal S}$ such that $\meet(p)\cap K^{[n]}=\emptyset$, ordered
by reverse inclusion.
\end{defi}

The following lemma is the main technical result of this section.

\begin{lemma}\label{mainlemma}
Let ${\mathcal S}=\{ \sigma_\alpha: \alpha \in S\}$ and
${\mathcal Z} =\{ \tau_\gamma: \gamma \in Z\}$ be
two regular level sequences in $T^{[n]} $ and $T^{[m]}$
respectively such that $S$ is a stationary subset
of $C$ and ${\mathcal S} \subseteq K^{[n]}_\varphi$. Assume that, for every
$\alpha \in S$ and $\gamma \in Z$, if $\alpha < \gamma$ then
$$
\sigma_\alpha \cup \tau_\gamma \restriction \alpha \in L_\varphi^{[n+m]}.
$$
Then $\mathcal P_{\mathcal Z}$ is c.c.c.
\end{lemma}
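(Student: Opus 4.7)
The plan is to prove the lemma by contradiction: assume $\{p_\nu:\nu<\omega_1\}\subseteq\mathcal P_{\mathcal Z}$ is an uncountable antichain and derive a contradiction by combining the hypothesis with Facts~\ref{delta}, \ref{supsub}, \ref{cont} and \ref{gen}.

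First I would carry out the standard thinning. The $\Delta$-system lemma together with Fact~\ref{delta}, Fodor's pressing-down, and repeated pigeonhole yields a uniform family: a common root $r$, disjoint differences $q_\nu=p_\nu\setminus r$ of fixed finite size $k$ with common height profile $\gamma_1(\nu)<\cdots<\gamma_k(\nu)$, blocks that are $\omega_1$-separated so that $\gamma_k(\nu)<\gamma_1(\nu')$ whenever $\nu<\nu'$, and regular concatenated tuples in $T^{[km]}$. Since the only possible witnesses to incompatibility of $p_\nu$ and $p_{\nu'}$ come from $q_\nu\times q_{\nu'}$, a further pigeonhole on $\{1,\ldots,k\}^2$ supplies a fixed obstruction pair $(i,j)$ with $\tau_{\gamma_i(\nu)}\meet\tau_{\gamma_j(\nu')}\in K^{[m]}$ for every $\nu<\nu'$ in the thinned family.

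Next I would bring in the hypothesis through the $\sigma$-witnesses. Since $S$ is stationary in $C$, for each $\nu$ one can choose a limit-of-$C$ point $\alpha_\nu\in S$ with $\gamma_k(\nu-1)<\alpha_\nu<\gamma_1(\nu)$; another pressing-down plus Fact~\ref{delta} makes $\langle\sigma_{\alpha_\nu}:\nu<\omega_1\rangle$ a regular level sequence in $K_\varphi^{[n]}$. For any $\nu<\nu'$, applying the hypothesis to each $\tau_{\gamma_l(\nu')}\in q_{\nu'}$ (with $\alpha=\alpha_\nu<\gamma_l(\nu')$) and combining with the supersequence-closure of $L_\varphi$ (Fact~\ref{supsub}) gives
\[
\sigma_{\alpha_\nu}\cup q_{\nu'}\restriction\alpha_\nu\in L_\varphi^{[n+km]}.
\]

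Finally, I would extract the contradiction. Apply Fact~\ref{gen} to the regular $K_\varphi^{[n]}$-sequence $\{\sigma_{\alpha_\nu}\}$ to find indices $\nu<\nu'$ with $\sigma_{\alpha_\nu}\meet\sigma_{\alpha_{\nu'}}\in K^{[n]}$. Unpacking the $L_\varphi^{[n+km]}$-membership of the combined tuple via case~(1) of $\varphi_0(\mathcal F_{n+km})$ together with the continuity Fact~\ref{cont}, its restriction to each sufficiently large $\xi\in C\cap\alpha_\nu$ sits inside some $A_\xi\in\mathcal F_{n+km}\cap N_\xi^{n+km}$, whose pairwise meets avoid $K^{[n+km]}$. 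The plan is to align, via a diagonal pigeonhole on the countably many values of $q_{\nu''}\restriction\alpha_\nu$ as $\nu''$ varies, a partner $(n+km)$-tuple into the same $A_\xi$, and then to show that the $K^{[n]}$-meet in the $\sigma$-block together with the uniform $K^{[m]}$-obstruction in the $q$-block forces the coordinatewise meet of the two tuples to lie entirely in $K^{[n+km]}$, contradicting the defining property of $A_\xi$. The hard part will be precisely this alignment-and-propagation step: it is the combinatorial heart of the argument, and it is what the uniform induced colorings $K_\varphi,L_\varphi$ are designed to make work without reference to any countable elementary submodel, in contrast to the corresponding step in \cite{linear_basis}.
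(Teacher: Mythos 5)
There is a genuine gap here, and it sits exactly where you flag ``the hard part'': the alignment step is not an implementation detail but the entire content of the lemma, and the sketch you give of it does not work. First, a smaller but real error: the claim that ``a further pigeonhole on $\{1,\ldots,k\}^2$ supplies a fixed obstruction pair $(i,j)$ \ldots for every $\nu<\nu'$'' is a partition statement about pairs of countable ordinals, not a pigeonhole; it asserts an uncountable homogeneous set for a colouring of $[\omega_1]^2$ into finitely many pieces, and by Sierpi\'{n}ski's colouring $\omega_1 \not\to (\omega_1)^2_2$, no such uniformization is available. Second, even granting a single obstruction pair for one specific $(\nu,\nu')$, your proposed contradiction does not close: membership of two tuples in a common $A\in \Fcal_{n+km}$ only forbids their meet from lying in $K^{[n+km]}$, i.e.\ from having \emph{all} $n+km$ coordinates in $K$. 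Knowing that the $\sigma$-block meets land in $K$ and that \emph{one} pair $\tau_{\gamma_i(\nu)}\meet\tau_{\gamma_j(\nu')}$ lies in $K^{[m]}$ says nothing about the remaining $q$-block coordinates, so you cannot conclude the full meet is in $K^{[n+km]}$. Finally, the ``diagonal pigeonhole on the countably many values of $q_{\nu''}\restriction\alpha_\nu$'' does not place two distinct combined tuples into the \emph{same} tree $A$: the witnessing trees for case (1) of $\varphi_0$ live in $N^{n+m}_\xi$ and vary with $\xi$ and with the condition, and agreeing on a restriction below $\alpha_\nu$ does not force agreement of witnesses.

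The paper's proof avoids all three problems by running the argument in the positive direction: it fixes a single tree $R_0\in(\Fcal_n)^{\perp}$ witnessing ${\mathcal S}\subseteq K^{[n]}_\varphi$, works blockwise with $(n+m)$-tuples $\sigma_\alpha\restriction\xi\cup\upsilon^i_\alpha\restriction\xi$ for each $i<l$ separately (disposing of the off-diagonal meets $\upsilon^i_\alpha\meet\upsilon^j_\beta$, $i\neq j$, by pressing down until they coincide with the within-condition meets), and then performs the alignment with a countable $N\prec H(\omega_2)$: taking $\zeta=N\cap\omega_1\in(\eta_0,\delta)\cap C$, the witnessing trees $A_i\in\Fcal_{n+m}\cap N^{n+m}_\zeta$ for the single condition $p_\delta$ already lie in $N$, the definable set $H=\{\eta:\exists\alpha\in S\,\forall i<l\,(\sigma_\alpha\restriction\eta\cup\upsilon^i_\alpha\restriction\eta\in A_i)\}$ is in $N$ and contains $\zeta\notin N$, hence is uncountable, and Fact~\ref{uncountableX} applied to $R_0$ produces one pair $\alpha,\beta$ with $\sigma_\alpha\meet\sigma_\beta\in K^{[n]}$; since each $(\sigma_\alpha\cup\upsilon^i_\alpha)\meet(\sigma_\beta\cup\upsilon^i_\beta)\notin K^{[n+m]}$, every diagonal meet $\upsilon^i_\alpha\meet\upsilon^i_\beta$ misses $K^{[m]}$ and the pair is compatible. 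This elementary-submodel reflection is the mechanism you would need to supply; without it, the proposal is a correct setup followed by a placeholder where the proof should be.
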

\begin{proof}
Before starting the proof, notice that by using the Pressing Down Lemma
and shrinking $S$ if necessary
we may assume that there is a fixed tree $R_0\in (\mathscr F_n)^{\perp}$ witnessing
that $\sigma_\alpha$ is in $K^{[n]}_\varphi$, for all $\alpha \in S$.
By shrinking $S$ and $Z$ if necessary we may moreover assume that
for every $\alpha\in S$, $\gamma \in Z$, every $i<n$ and $j<m$,
$\sigma_\alpha(i)$ and $\tau_\gamma(j)$ are incomparable in $T$.

Now, assume $\mathcal A$ is an uncountable subset of $\mathcal P_{\mathcal Z}$.
We need to find distinct $p$ and $q$ in $\mathcal A$ which are compatible,
i.e. such that $\meet(p\cup q)\cap K^{[m]}=\emptyset$.
By a standard $\Delta$-system argument we can assume that
all elements of $\mathcal A$ have a fixed size $k$ and are mutually disjoint.
For each $\alpha \in S$ we pick an element $p_\alpha$ of $\mathcal A$
such that $\Ht(\tau)> \alpha$, for all $\tau \in p_\alpha$.

Fix for a moment one such $\alpha$. Since by our assumption
$\sigma_\alpha \cup \tau \restriction \alpha \in L_\varphi^{[n+m]}$,
for all $\tau \in p_\alpha$, we can fix an ordinal $\eta_\alpha <\alpha$
such that for every $\xi \in (\eta_\alpha,\alpha)\cap C$ and every
$\tau \in p_\alpha$ there is a tree $R\in \mathscr F_{n+m}\cap N^{n+m}_\xi$
such that $\sigma_\alpha \restriction \xi \cup \tau \restriction \xi \in R$.
By applying the Pressing Down Lemma and shrinking $S$ again we
may assume that all the ordinals $\eta_\alpha$ are equal to some $\eta^*$.

Now, for each $\alpha \in S$, fix an enumeration
$\{ \upsilon^0_\alpha, \ldots, \upsilon^{l_\alpha-1}_\alpha\}$ of
distinct elements of $\{ \tau \restriction \alpha : \tau \in p_\alpha\}$.
We may assume that there is a fixed integer $l$ such that
$l_\alpha=l$, for all $\alpha \in S$. Moreover, by shrinking $S$
further, we may assume that if $\alpha,\beta\in S$ are distinct
then $\upsilon^i_\alpha$ and $\upsilon^j_\beta$ are incomparable,
for all $i,j<l$.
For each $\alpha \in S$ let
$$
F_\alpha = \{ \sigma_\alpha(j): j< n\}\cup  \{ \upsilon^i_\alpha(j): i<l, j<m\}
$$
and let
$$
D_\alpha = \bigcup \{ D(s,t): s,t\in F_\alpha\}.
$$
Then $D_\alpha$ is finite, so if $\alpha$ is a limit ordinal
and we let $\xi_\alpha = \max (D_\alpha) +1$
then $\xi_\alpha <\alpha$.
By the Pressing Down Lemma and shrinking $S$ yet again
we may assume that there exists
a fixed ordinal $\xi$, a sequence $\sigma\in T_\xi^{[n]}$, and
sequences $\upsilon^i\in T_\xi^{[m]}$, for $i<l$,
such that, for each $\alpha \in S$, we have:
\begin{enumerate}
\item $\xi_\alpha =\xi$,
\item $\sigma_\alpha\restriction \xi =\sigma$,
\item $\upsilon^i_\alpha\restriction \xi=\upsilon^i$, for $i<l$.
\end{enumerate}
Now, notice that if $\alpha,\beta \in S$ are distinct,
then for every $i<l$,
$$
\Delta(\upsilon^i_\alpha,\upsilon^i_\beta) =
\Delta(\sigma_\alpha,\sigma_\beta).
$$
Moreover, if $i$ and $j$ are distinct then

$$
\upsilon^i_\alpha \meet \upsilon^j_\beta =
\upsilon^i_\alpha \meet \upsilon^j_\alpha =
\upsilon^i_\beta \meet \upsilon^j_\beta
\notin  K^{[m]}.
$$
Therefore, $p_\alpha$ and $p_\beta$ are compatible in
$\mathcal P_{{\mathcal Z}}$ provided $\upsilon^i_\alpha \meet \upsilon^i_\beta \notin K^{[m]}$,
for all $i<l$.
We have finally set the stage for the proof of the lemma.

Fix a sufficiently large regular cardinal $\theta$ and a countable
elementary submodel $M$ of $H(\theta)$ containing all the relevant
objects and such that $\delta=M\cap \omega_1$ belongs to $S$.
Working in $M$ fix a countable elementary submodel $N$
of $H(\omega_2)$ containing all the relevant objects
and let $\zeta =N\cap \omega_1$. Now, by our assumption
$\eta^*,C \in N$, $C$ is a club, and $N\in M$, and so
we have that $\zeta \in (\eta^*,\delta)\cap C$.
Therefore, for each $i<l$ there exists
a tree $A_i\in \mathscr F_{n+m}\cap N^{n+m}_\zeta$ such that
$\sigma_\delta \restriction \zeta \cup \upsilon_\delta^i\restriction \zeta \in A_i$.
Since $N^{n+m}_\zeta \subseteq N$ we know that $A_i\in N$, for all $i$.
Let
$$
H=\{ \eta : \exists \alpha \in S [\alpha >\eta \meet
\forall i<l(\sigma_\alpha \restriction \eta \cup \upsilon_\alpha^i\restriction \eta
\in A_i)]\}
$$
Since all the parameters in the definition of $H$ are in $N$, by elementarity of $N$
it follows that $H\in N$. On the other hand $\zeta \in H\setminus N$,
therefore $H$ is uncountable.
Fix a $1-1$ function $f:H\rightarrow S$ with $f\in N$ such that
for every $\eta \in H$, $f(\eta)$ witnesses that $\eta \in H$.
Then the set
$X=\{ \sigma_{f(\eta)}: \eta \in H\}$ belongs to $N$.
We also know that $X$ is an uncountable subset of $R_0$.
Since $T^{[n]}$ is a special tree, by shrinking $H$ we may assume that
$Y=\{ \sigma_{f(\eta)}\restriction \eta: \eta \in H\}$
is an antichain in $T^{[n]}$. Since  $Y\subseteq R_0$
and $R_0\in (\mathscr F_n)^{\perp}$, by Fact~\ref{uncountableX}, there are
distinct $\eta,\rho \in H$ such that:
$$
\sigma_{f(\eta)}\meet \sigma_{f(\rho)}=
\sigma_{f(\eta)}\restriction \eta \meet
\sigma_{f(\rho)}\restriction \rho \in K^{[n]}.
$$
Let $\alpha=f(\eta)$ and $\beta=f(\rho)$.
We claim that $p_{\alpha}$ and $p_{\beta}$
are compatible in $\mathcal P_{{\mathcal Z}}$.
To see this, consider some $i<l$. We know that
$\sigma_{\alpha}\restriction \eta \cup
\upsilon^i_{\alpha} \restriction \eta$ and
$\sigma_{\beta}\restriction \rho \cup
\upsilon^i_{\beta} \restriction \rho$ belong to $A_i$.
Therefore,
$$
(\sigma_\alpha \cup \upsilon^i_\alpha)\meet
(\sigma_\beta \cup \upsilon^i_\beta) =
(\sigma_{\alpha}\restriction \eta \cup
\upsilon^i_{\alpha} \restriction \eta)\meet
(\sigma_{\beta}\restriction \rho \cup
\upsilon^i_{\beta} \restriction \rho) \notin K^{[n+m]}.
$$
Since
$\sigma_{\alpha} \meet
\sigma_{\beta}\in K^{[n]}$
it follows that
$$
\upsilon^i_{\alpha} \meet
\upsilon^i_{\beta}  \notin K^{[m]}.
$$
Since this is true for all $i$ it follows that
$p_{\alpha}$ and $p_{\beta}$ are compatible.
\end{proof}

\begin{lemma} [$\MA_{\aleph_1}$]\label{ma_lemma}
Let ${\mathcal S}=\{ \sigma_\alpha: \alpha \in S\}$ and
${\mathcal Z} =\{ \tau_\gamma: \gamma \in Z\}$ be
two regular level sequences in $T^{[n]} $ and $T^{[m]}$
respectively such that $S$  and $Z$ are stationary subsets
of $C$. Assume ${\mathcal S} \subseteq K^{[n]}_\varphi$ and
${\mathcal Z} \subseteq K^{[m]}_\varphi$. Then there exist
$\alpha \in S$ and $\gamma \in Z$ such that
$\alpha <\gamma$ and
$$
\sigma_\alpha \cup \tau_\gamma \restriction \alpha \in K_\varphi^{[n+m]}.
$$
\end{lemma}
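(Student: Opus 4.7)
The plan is to argue by contradiction and combine Lemma~\ref{mainlemma} with $\MA_{\aleph_1}$. Suppose towards a contradiction that for every $\alpha\in S$ and $\gamma\in Z$ with $\alpha<\gamma$ we have $\sigma_\alpha\cup\tau_\gamma\restriction\alpha\in L^{[n+m]}_\varphi$. This is precisely the hypothesis of Lemma~\ref{mainlemma}, so $\mathcal{P}_{\mathcal{Z}}$ is c.c.c.; the task is then to derive a contradiction from this.

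First I would normalise $\mathcal{Z}$. Since $\mathcal{Z}\subseteq K^{[m]}_\varphi$, each $\tau_\gamma$ belongs to some tree $R_\gamma\in(\mathcal{F}_m)^{\perp}\cap N^m_\gamma$. Applying the Pressing Down Lemma to $\gamma\mapsto R_\gamma$, I can shrink $Z$ to a stationary $Z_0\subseteq Z$ on which the $R_\gamma$ all equal a single fixed $R\in(\mathcal{F}_m)^{\perp}$, so that $\mathcal{Z}_0:=\{\tau_\gamma:\gamma\in Z_0\}\subseteq R$. The hypotheses of Lemma~\ref{mainlemma} persist after this shrinking, so $\mathcal{P}_{\mathcal{Z}_0}$ is again c.c.c.

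Now I invoke the classical consequence of $\MA_{\aleph_1}$ that every c.c.c.\ poset has the Knaster property. Applied to $\mathcal{P}_{\mathcal{Z}_0}$ and to the $\aleph_1$-indexed family of singleton conditions $\{\{\tau_\gamma\}:\gamma\in Z_0\}$, this yields an uncountable $Z_1\subseteq Z_0$ such that $\{\tau_\gamma\}$ and $\{\tau_{\gamma'}\}$ are compatible in $\mathcal{P}_{\mathcal{Z}_0}$ for every two distinct $\gamma,\gamma'\in Z_1$; equivalently, $\tau_\gamma\meet\tau_{\gamma'}\notin K^{[m]}$ for all such pairs.

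Finally, $\{\tau_\gamma:\gamma\in Z_1\}$ is an uncountable subset of $R$, so, using that $T^{[m]}$ is a special tree (as already exploited in the proof of Lemma~\ref{mainlemma}), I pass to an uncountable antichain $X\subseteq R$. Fact~\ref{uncountableX}, applied to $X$ and $R\in(\mathcal{F}_m)^{\perp}$, then produces distinct $\tau,\tau'\in X$ with $\tau\meet\tau'\in K^{[m]}$, contradicting the pairwise compatibility secured in the previous paragraph. The only step that relies on more than routine manipulation is the Knaster upgrade, where the c.c.c.\ conclusion of Lemma~\ref{mainlemma} must be converted into an uncountable pairwise-compatible subfamily of singletons; this is where the full strength of $\MA_{\aleph_1}$ (rather than merely c.c.c.) is used.
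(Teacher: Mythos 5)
Your proposal is correct and follows essentially the same route as the paper: assume the negation, apply Lemma~\ref{mainlemma} to get that $\mathcal P_{\mathcal Z}$ is c.c.c., press down so that a single $R\in(\mathcal F_m)^\perp$ witnesses $\mathcal Z\subseteq K^{[m]}_\varphi$, use $\MA_{\aleph_1}$ to extract an uncountable pairwise-compatible subfamily, and derive a contradiction with $R\in(\mathcal F_m)^\perp$. The only cosmetic difference is the last step: the paper observes that the tree generated by the linked family lies in $\mathcal F_m$ and meets $R$ uncountably, whereas you pass to an antichain and invoke Fact~\ref{uncountableX}; these are interchangeable.
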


\begin{proof}
For every $\gamma \in Z$ fix a tree
$R_\gamma \in (\mathscr F_m)^{\perp}\cap N_\gamma^m$
such that $\tau_\gamma \in R_\gamma$, i.e.
witnessing that $\tau_\gamma \in K^{[m]}_{\varphi}$.
Since $Z$ is stationary by the Pressing Down Lemma
and shrinking $Z$ if necessary we may assume that
all the $R_\gamma$ are equal to some tree $R$.
Assume towards contradiction that for every $\alpha \in S$
and $\gamma \in Z$, if $\alpha <\gamma$ then
$
\sigma_\alpha \cup \tau_\gamma \restriction \alpha \in L_\varphi^{[n+m]}.
$
By Lemma~\ref{mainlemma} $\mathcal P_{\mathcal Z}$ is c.c.c.
By $\MA_{\aleph_1}$ we can find an uncountable subset
$Y$ of $Z$ such that
$\tau_\alpha \meet \tau_\beta \notin K^{[m]}$,
for every distinct $\alpha, \beta \in Y$.
This means that the tree $R^*$ generated by $\{ \tau_\alpha: \alpha \in Y\}$
belongs to $\mathscr F_m$. However, $R^*\subseteq R$
and $R$ is orthogonal to all trees in $\mathscr F_m$, a contradiction.
\end{proof}

\section{The forcing $\partial^*(K)$}

In this section we define a notion of forcing  $\partial^*(K)$ and
prove that it is proper. We then show that either there
is an uncountable antichain $Y$ in $T$ such that $\meet(Y)\cap K=\emptyset$
or forcing with $\partial^*(K)$ adds an uncountable subset $X$ of $K_\varphi \cap T\restriction C$
such that $\pi(X)=X$.
Then it will be easy to force again and obtain an uncountable subset
$Z$ of $X$ such that $\meet(Z)\subseteq K$.
Before we start it will be convenient to define a certain club of
countable elementary submodels of $H(\omega_2)$.
Fix, for each $\delta <\omega_1$,  a surjection
$e_\delta:\omega \rightarrow T_\delta$.

\begin{defi} $\mathcal E$ is the collection of all
countable elementary submodels $M$ of $H(\omega_2)$ such
that  $T,C,K,  (e_\delta : \delta <\omega_1)$
as well as $(N^n_\xi: \xi \in C)$, for $n<\omega$,
all belong to $M$.
\end{defi}

We are now in the position to define the partial order
$\partial^*(K)$.

\begin{defi}
 $\partial^*(K)$ consists of all pairs $(X_p,\mathcal M_p)$
such that:
\begin{enumerate}
\item $X_p$ is a finite subset of $T\restriction C$, $\pi(X_p)=X_p$, and
$X_p\cap T_\alpha \in K_\varphi$,
for all $\alpha \in \lev(X_p)$\footnote{Here, of course, we identify
$X_p\cap T_\alpha$ with its $\leq_{\rm lex}$-increasing enumeration.}.

\item $\mathcal M_p$ is a finite $\in$-chain of elements of $\mathcal E$
such that for every $x\in X_p$ there is $M\in \mathcal M_p$ such
that $\Ht(x)=M\cap \omega_1$.
\end{enumerate}
The order of $\partial^*(K)$ is the coordinatewise reverse inclusion,
i.e. $q\leq p$ iff $X_p\subseteq X_q$ and $\mathcal M_p\subseteq \mathcal M_q$.
\end{defi}

In what follows, for $p \in \partial^*(K)$, $M_{p}^i$ denotes the $i$-th model
in $\mathcal{M}_p$, in the enumeration induced by the heights of the models.

\begin{teo}\label{proper}
$\partial^*(K)$ is a proper forcing notion.
\end{teo}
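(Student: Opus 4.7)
The natural $(M, \partial^*(K))$-generic extension of a given $p \in \partial^*(K) \cap M$, for $M \in \mathcal E$, is
\[
q \;=\; (X_p,\ \mathcal M_p \cup \{M\}).
\]
That $q$ is a condition with $q \le p$ is immediate: $\mathcal M_p \in M$ implies every $N \in \mathcal M_p$ satisfies $N \in M$, hence $N \cap \omega_1 < M \cap \omega_1$, so $M$ legitimately caps the $\in$-chain; the height-matching clause is unaffected since $X_p$ is unchanged. I would then prove $(M,\partial^*(K))$-genericity in the standard form: for every open dense $D \in M$ and every $r \le q$, find $s \in D \cap M$ compatible with $r$. After extending, assume $r \in D$.

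Set $\delta = M \cap \omega_1 \in C$ and split $r$ at $\delta$: the ``top'' of $r$ consists of tuples $\sigma_0, \ldots, \sigma_k \in K_\varphi$ at levels $\delta \le \alpha_0 < \cdots < \alpha_k$, together with the top segment of the model-chain $\mathcal M_r$ starting at $M$. The strategy is to find $s \in D \cap M$ whose analogous ``top'' mirrors that of $r$ at some level $\gamma < \delta$ in $C$, and then merge. Compatibility reduces to checking, at each new level $\beta \in \lev(X_s) \setminus \lev(X_r)$, that the augmented tuple $(X_s \cap T_\beta) \cup \{\,t \restriction \beta : t \in X_r,\ \Ht(t) > \beta\,\}$ lies in $K_\varphi$ for the appropriate arity; at levels in $\lev(X_r)$ or above $\delta$ the $K_\varphi$-condition is inherited. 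By density of $D$, elementarity of $M$, and the fact that $r$ witnesses realizability above $\delta$ of the template (model heights, tree structure, arities), the set $S$ of $\gamma < \delta$ supporting such an $s^\gamma$ is stationary in $\delta$ and definable in $M$, yielding regular level sequences $\langle \sigma_i^{s^\gamma} : \gamma \in S\rangle$ in $K_\varphi^{[n_i]}$ for each $i \le k$.

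I would then iterate Lemma~\ref{ma_lemma} across the top levels. At stage $i$, apply the lemma to the level sequence $\langle \sigma_i^{s^\gamma}\rangle$ on the ``low'' side and to a level sequence encoding $\sigma_i$ on the $r$-side (obtained by pressing down via the template); the lemma thins $S$ to a stationary subset on which the merged tuple at $\alpha_i$ belongs to $K_\varphi$. After $k+1$ such refinements, one lands on stationarily many $\gamma \in S$ for which $s = s^\gamma$ is compatible with $r$ at every level of the top part, with $\MA_{\aleph_1}$ (available from $\BPFA$) making each application of the lemma go through. Remaining $\pi$-closure levels lie in $K_\varphi$ by subsequence closure (Fact~\ref{supsub}), and any limit level of the chain is handled by continuity (Fact~\ref{cont}); the merged $(X_r \cup X_s, \mathcal M_r \cup \mathcal M_s)$ is then a common extension of $r$ and $s$ in $\partial^*(K)$.

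\textbf{Main obstacle.} The hardest part is the \emph{simultaneous} handling of all $k+1$ top levels of $r$: Lemma~\ref{ma_lemma} is bilateral and one-level-at-a-time, so each inductive step must be set up as a two-sided problem over the already-refined stationary set of $\gamma$'s while preserving the templates needed for the remaining levels. A secondary subtlety is verifying that the template of $r$'s top part is genuinely definable in $M$ from parameters in $M$, so that the family $\{ s^\gamma : \gamma \in S \}$ actually lies in $M$ and the elementarity argument produces a legitimate reflection.
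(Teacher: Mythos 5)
Your skeleton agrees with the paper's: the same generic condition $q=(X_p,\mathcal M_p\cup\{M\cap H(\omega_2)\})$, the same splitting of $r\in D$ into $r'=r\cap M$ and a top part $r^*$, reflection of the top part into $M$, and amalgamation via $\pi$-closure with Lemma~\ref{ma_lemma} as the engine for $K_\varphi$-membership at the new levels. But the way you propose to run the reflection has two problems. The smaller one is that your iteration is indexed by the wrong set: since $\pi(X_r)=X_r$, the traces on any level $\beta<\delta$ of \emph{all} elements of $X_{r^*}$ coincide with $e_{\delta_0}(n_0)\restriction\beta$, where $\delta_0=M\cap\omega_1$ is the lowest top level. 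So the tuple to be adjoined at each new level of the reflected condition is always a restriction of this one fixed tuple; there is no ``simultaneous handling of all $k+1$ top levels,'' and pairing the $i$-th new level against $\sigma_i$ rather than against $\sigma_0$ would actually be wrong, since $K_\varphi$ is closed under subsequences but not supersequences (Fact~\ref{supsub}), so verifying a sub-tuple does not verify the full trace. What genuinely must be iterated is the choice of the $l=|\mathcal M_r\setminus M|$ \emph{new} levels $\xi_0<\cdots<\xi_{l-1}$ of $s$, one per model of $\mathcal M_{r^*}$, each requiring $e_{\xi_i}(n_i)\cup e_{\delta_0}(n_0)\restriction\xi_i\in K_\varphi$.

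The essential missing idea is the mechanism that keeps this $l$-step construction alive. Lemma~\ref{ma_lemma} is a pure existence statement: it produces one good pair $\alpha<\gamma$, and it does not ``thin $S$ to a stationary subset on which the merged tuple belongs to $K_\varphi$.'' After one application as you describe it, your stationary reservoir is gone, and your single parameter $\gamma$ cannot encode the $l$ interdependent level choices in the first place. The paper's device is the tower of formulas $\theta_l,\dots,\theta_0$ with nested stationary quantifiers, verified at $(\delta_0,\dots,\delta_{l-1})$ in Claim~\ref{induction} using that each $\delta_i=M^i_{r^*}\cap\omega_1$ lies in every club belonging to $M^i_{r^*}$; this yields a stationary splitting tree $U\in M$, and at step $j$ the fresh stationary set $S_j$ of candidates for $\xi_j$ comes from the tree, not from the previous step. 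Lemma~\ref{ma_lemma} is then invoked only contrapositively: if no $\xi\in S_j\cap M$ worked, the set $Z$ of heights of counterexample tuples $e_\eta(n_0)$ would be definable in $M$ and contain $\delta_0$, hence be stationary, contradicting the lemma. Your assertion that ``the set $S$ of $\gamma$ supporting such an $s^\gamma$ is stationary and definable in $M$'' is precisely what the $\theta_i$-tower exists to prove, and without it the step-by-step refinement you outline does not go through.
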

\begin{proof}
Fix a countable $M \prec H(2^{|\partial^*(K)|^+})$ such that
$\partial^*(K),{\mathcal E} \in M$. Given a condition $p = (X_p, \mathcal{M}_p) \in M$,
we need to find $q \leq p$ that is $(\partial^*(K), M)$-generic.
Set
$$
q = (X_p, \mathcal{M}_p \cup \{M \cap H(\omega_2)\}).
$$

We claim that $q$ is as desired. To see this, fix a dense set $D \in M$
and a condition $r \leq q$. We need to find $s\in D\cap M$ which is
compatible with $r$. By replacing $r$ with a stronger condition
we may assume that $r\in D$.
Define
$$
r' = (X_r \cap M, \mathcal{M}_r \cap M)
$$
and
$$
r^* = (X_r \setminus M, \mathcal{M}_r \setminus M)
$$
Note that $r'$ and $r^*$ are both conditions in $\partial^*(K)$.
Let $l=|\mathcal{M}_r \setminus M|$.
For every $i < l$, let $\delta_i = M_{r^*}^{i} \cap \omega_1$,
let $k_i=|X_{r^*}\cap T_{\delta_i}|$ and fix $\sigma_i\in [\omega]^{k_i}$
such that $e_{\delta_i}[\sigma_i]= X_{r^*}\cap T_{\delta_i}$.
As before, we also think of $X_{r^*}\cap T_{\delta_i}$ as an element
of $T^{[k_i]}$ via its $\leq_{\rm lex}$-increasing enumeration.
We now define formulas $\theta_i$, for $i<l$, by reverse
induction on $i$.

$\theta_{l}(\xi_0,\ldots,\xi_{l-1})$ holds if there is
 a condition $s=(X_s,\mathcal M_s)\in \partial^*(K)$ such that:
\begin{enumerate}
\item $\mathcal M_s =\{ M_s^0,\ldots,M_s^{l-1}\}$,
\item $M_s^i\cap \omega_1=\xi_i$, for all $i<l$,
\item $X_s\cap T_{\xi_i} = e_{\xi_i}[\sigma_i]$ and $|X_s\cap T_{\xi_i}|=k_i$, for all $i<l$,
\item $(X_{r'} \cup X_s, \mathcal{M}_{r'} \cup \mathcal{M}_s) \in D$.
\end{enumerate}

Suppose $\theta_{i+1}$ has been defined for some $i< l$. Then
$$
\theta_{i}(\xi_0,\ldots,\xi_{i-1})
\mbox{ iff } Q \eta \, \theta_{i+1}(\xi_0,\ldots,\xi_{i-1},\eta).
$$
Here $Q \eta \, \theta(\eta)$ means ``there are stationary many $\eta$
such that $\theta(\eta)$ holds".

\begin{remark}
\label{absolute}
 Notice that the parameters of each $\theta_i (\xi_{0}, \ldots, \xi_{i-1})$
are in $M$, so if $\xi_0,\ldots,\xi_{i-1} \in M$
then $\theta_i(\xi_0,\ldots,\xi_{i-1})$ holds iff it holds
in $M$. Thus, if $W_i$ is the set of tuples $(\xi_0,\ldots,\xi_{i-1})$
such that $\theta_{i}(\xi_0,\ldots,\xi_{i-1})$ holds then $W_i\in M$,
for all $i \leq l$. We set $W=\bigcup_{i\leq l}W_i$.

 Notice also  that  if $\theta_i(\xi_0,\ldots,\xi_{i-1})$ holds
then $e_{\xi_j}[\sigma_j]\in K^{[k_j]}_\varphi$, for all $j<i$.

\end{remark}

\begin{cla}
\label{induction}
$\theta_i(\delta_0,\ldots,\delta_{i-1})$ holds, for all $i\leq l$.
\end{cla}

\begin{proof} We prove this by reverse induction on $i$.
Notice that $\theta_{l}(\delta_0,\ldots,\delta_{l-1})$
holds as witnessed by the condition $r^*$.
Suppose we have established $\theta_{i+1}(\delta_0,\ldots,\delta_{i})$.
Since $W_{i+1}\in M\cap H(\omega_2)$ and $M \cap H(\omega_2) = M_{r^*}^0 \subseteq M_{r^*}^{i}$
it follows that the set
$$
Z=\{ \eta: (\delta_0,\ldots,\delta_{i-1},\eta)\in W_{i+1}\}
$$
also belongs to $M_{r^*}^{i}$. If $Z$ were non stationary,
by elementarity, there would be a club $E\in M_{r^*}^{i}$ disjoint
from it, but $\delta_i \in Z$ and $\delta_i$ belongs to any
club which is in $M_{r^*}^{i}$, a contradiction.
\end{proof}

By Claim~\ref{induction} we can pick a {\em stationary splitting
tree} $U\subseteq W$. This means that $U\subseteq (\omega_1)^{\leq l}$ is a tree
and for every node $t=(\xi_0,\ldots,\xi_{i-1})\in U $ of height $ i<l$ the set
$$
S_t=\{ \eta: (\xi_0,\ldots,\xi_{i-1},\eta) \in U\}
$$
is stationary. We can moreover assume that $U \in M$.
We now build by induction an increasing sequence $(\xi_i :i<l)$ of
ordinals in $M$ such that:

\medskip

\begin{enumerate}
\item $(\xi_0,\ldots,\xi_i)\in U$, for all $i$,
\item $e_{\xi_i}[\sigma_i]\cup e_{\delta_0}[\sigma_0]\restriction \xi_i \in K_\varphi^{[k_i+k_0]}$, for all $i$.
\end{enumerate}
\medskip
The reason we require $(2)$ is that
we want the condition $s$ witnessing 
the formula $\theta_l(\xi_0,\ldots,\xi_{l-1})$ to be compatible with $r$
and this is the only  problem we may encounter. 
Now, suppose $j<l$ and we have picked $\xi_i$, for all $i<j$. Consider the set
$S_j=\{ \eta : (\xi_0,\ldots,\xi_{j-1},\eta)\in U\}$.

\begin{cla} There is $\xi \in S_j\cap M$ such that
$e_{\xi}[\sigma_j]\cup e_{\delta_0}[\sigma_0]\restriction \xi  \in K^{[k_j+k_0]}_\varphi$.
\end{cla}

\begin{proof} Assume otherwise. We know that
$S_j$ is stationary and that the level sequence
$\mathcal S_j = \{ e_{\eta}[\sigma_j]: \eta \in S_j\}$ is contained
in $K_\varphi^{[k_j]}$. By shrinking $S_j$  we may also assume that
$\mathcal S_j$ is regular.
Let
$$
Z= \{ \eta : e_{\eta}[\sigma_0]\in K^{[k_0]}_\varphi \meet \forall \xi \in S_j\cap \eta
[e_{\xi}[\sigma_j]\cup e_{\eta}[\sigma_0]\restriction \xi \notin K_\varphi^{[k_j+k_0]}]\}.
$$
Then $Z\in M$ and since by our assumption  $\delta_0\in Z$,
it follows that $Z$ is stationary. By shrinking $Z$,
we may assume that the level sequence $\mathcal Z=\{ e_\eta[\sigma_0]: \eta \in Z\}$
is regular. Now, by Lemma~\ref{ma_lemma} and $\MA_{\aleph_1}$ we obtain
a contradiction.
\end{proof}

Suppose $(\xi_0,\ldots, \xi_{l-1})$ has been constructed.
Since $(\xi_0,\ldots,\xi_{l-1})\in U\cap M$, by elementarity there is
a condition $s\in \partial^*(K)\cap M$ witnessing this fact.
Let
$$
\bar{s}=(X_{r'}\cup X_{s}, \mathcal M_{r'}\cup \mathcal M_{s}).
$$
Then by (4) in the statement of $\theta_l(\xi_0,\ldots,\xi_{l-1})$  we know that
$\bar{s}\in D$. Since $s$ and $r'$ are both in $M$ so is
$\bar{s}$. We claim that $\bar{s}$ is compatible with $r$.
To see this we define a condition $u$ as follows.
Let
$$
X_u = \pi( X_r \cup X_{s}).
$$
Note that $\lev(X_u)=\lev(X_{r'})\cup \lev(X_s)\cup \lev(X_{r^*})$ and
we have
\begin{equation*}
X_u\cap T_{\alpha}=
\begin{cases}
e_{\delta_i}[\sigma_i] & \text{if $\alpha =\delta_i$, for some $i<l$,}
\\
e_{\xi_i}[\sigma_i]\cup e_{\delta_0}[\sigma_0]\restriction \xi_i & \text{if $\alpha = \xi_i$, for some $i<l$,}
\\
X_{r'} \cap T_\alpha &  \text{if $\alpha \in \lev(X_{r'})$.}
\end{cases}
\end{equation*}
In all cases we have that $X_u\cap T_\alpha \in K_\varphi$.
We let $\mathcal M_{u}=\mathcal M_{r'}\cup \mathcal M_{s}\cup M_{r^*}$.
It follows that $u\leq \bar{s},r$.
This completes the proof of Theorem~\ref{proper}.

\end{proof}

\section{The main theorem}

In this section we complete the proof of the main theorem saying that
the conjunction of  $\BPFA$ and $\varphi$ implies $\CAT$.
Let $G$ be $V$-generic over the poset $\partial^*(K)$ and define
in $V[G]$:
$$
X_G = \bigcup \{ X_p: p\in G \}.
$$
Note that $\pi(X_G)=X_G$, $\lev(X_G)\subseteq C$, and every finite subset of $X_G$
contained in one level of $T$ is in $K_\varphi$.
Let $\dot{X}_G$ be a canonical $\partial^*(K)$-name for $X_G$.
We first establish the following fact in the ground model $V$.

\begin{lemma}
\label{density}
Suppose there is no uncountable antichain $Y$ in $T$ such that
$\meet(Y)\cap K=\emptyset$. Then there is a condition $p\in \partial^*(K)$
which forces that $\dot{X}_G$ is uncountable.
\end{lemma}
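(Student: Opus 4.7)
The plan is to show that the trivial condition $p = (\emptyset,\emptyset)$ already forces $\dot X_G$ to be uncountable. Equivalently, I must verify that for every $q \in \partial^*(K)$ and every $\alpha_0 < \omega_1$, the set $D_{\alpha_0} := \{r \in \partial^*(K) : \sup\lev(X_r) > \alpha_0\}$ is dense in $\partial^*(K)$.

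Fix $q$ and $\alpha_0$; enumerate $\lev(X_q) = \{\gamma_1 < \cdots < \gamma_k\}$ and set $\sigma_i := X_q \cap T_{\gamma_i} \in K^{[n_i]}_\varphi$. Choose $\beta \in C$ with $\beta > \max(\alpha_0,\gamma_k)$ a limit point of $C$, and a model $M \in \mathcal E$ placed above $\mathcal M_q$ in the $\in$-chain with $M \cap \omega_1 = \beta$. I aim to adjoin a single node $t \in T_\beta$, so that $r := (\pi(X_q \cup \{t\}), \mathcal M_q \cup \{M\})$ is a condition extending $q$ with $\sup\lev(X_r) = \beta > \alpha_0$. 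By Fact~\ref{countableB}, the hypothesis guarantees $\{t\} \in K^{[1]}_\varphi$ automatically at such $\beta$, so the only thing to verify is that $\sigma_i \cup \{t \restriction \gamma_i\}_{\mathrm{sorted}} \in K^{[n_i+1]}_\varphi$ for each $i \leq k$. Since $t \restriction \gamma_i$ is determined by $s := t \restriction \gamma_k$, and since any $s \in T_{\gamma_k}$ extends upward to level $\beta$, the density question reduces to finding $s \in T_{\gamma_k}$ such that $\sigma_i \cup \{s \restriction \gamma_i\}_{\mathrm{sorted}} \in K^{[n_i+1]}_\varphi$ for every $i \leq k$.

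The existence of such an $s$ I plan to obtain by contradiction, using Lemma~\ref{ma_lemma} together with $\MA_{\aleph_1}$ (available from $\BPFA$). Supposing no such $s$, define $F_i := \{s \in T_{\gamma_k} : \sigma_i \cup \{s \restriction \gamma_i\}_{\mathrm{sorted}} \in L^{[n_i+1]}_\varphi\}$, so that $T_{\gamma_k} = \bigcup_{i \leq k} F_i$. A pigeonhole argument on $i$ yields an index $i^* \leq k$ whose cone of descendants in $T$ is uncountable; from this cone I extract, at limit points of $C$ and invoking Fact~\ref{countableB}, a regular level sequence $\mathcal Z \subseteq K^{[1]}_\varphi$ on a stationary $Z \subseteq C$. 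In parallel, using the subtree $R_{i^*} \in (\mathcal F_{n_{i^*}})^{\perp}$ witnessing $\sigma_{i^*} \in K^{[n_{i^*}]}_\varphi$ together with the coherent structure of $T$, I build a regular level sequence $\mathcal S \subseteq K^{[n_{i^*}]}_\varphi$ on a stationary $S \subseteq C$, whose members are coherent copies of $\sigma_{i^*}$. The target is to arrange $\mathcal S$ and $\mathcal Z$ so that for all $\alpha \in S$ and $\gamma \in Z$ with $\alpha < \gamma$, the combination $\sigma_\alpha \cup \tau_\gamma \restriction \alpha$ lies in $L^{[n_{i^*}+1]}_\varphi$; this directly contradicts Lemma~\ref{ma_lemma} and produces the required $s$.

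The principal obstacle is exactly this alignment of the two level sequences so that every combination is forced into $L_\varphi$ — in effect, promoting the single-level failure at $\gamma_k$ to a stationary-level failure of the form required by Lemma~\ref{ma_lemma}. Executing this promotion will rely on coherence of $T$ to identify stationarily many copies of $\sigma_{i^*}$ inside $R_{i^*}$, continuity of the induced coloring (Fact~\ref{cont}) to propagate failures between nearby levels, and careful applications of the Pressing Down Lemma in the spirit of the bookkeeping already appearing in the proof of Theorem~\ref{proper}.
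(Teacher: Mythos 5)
Your strategy is to prove outright that conditions with arbitrarily high levels are dense below every $q$. This is a much stronger statement than the lemma requires (it is essentially the assertion, noted but deliberately not proved in \S 4, that $\lev(X_G)$ is a club), and it is not what the paper does. The paper's proof takes a countable $M \prec H(\theta)$ with $\delta = M\cap\omega_1$, notes that $(\emptyset,\{M\cap H(\omega_2)\})$ is $(M,\partial^*(K))$-generic by Theorem~\ref{proper} and hence forces $\dot X_G\subseteq M$, and then observes that either some $t\in T_\delta$ lies in $K^{[1]}_\varphi$ --- in which case $(\{t\},\{M\cap H(\omega_2)\})$ forces $t\in\dot X_G\setminus M$, a contradiction --- or else $T^{[1]}_\delta\subseteq L^{[1]}_\varphi$ and Fact~\ref{countableB} produces the forbidden antichain. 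Because the working part being extended is empty, no compatibility of the new node with pre-existing levels ever has to be checked.

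The step you defer --- finding $s\in T_{\gamma_k}$ with $\sigma_i\cup\{s\restriction\gamma_i\}\in K_\varphi^{[n_i+1]}$ for all $i$ --- is precisely the combinatorial heart of the properness proof (the construction of the $\xi_i$'s there), and your sketch for it does not go through. To contradict Lemma~\ref{ma_lemma} you must produce a \emph{stationary} set $S$ of levels $\alpha$ carrying members $\sigma_\alpha\in K_\varphi$ such that the combination $\sigma_\alpha\cup\tau_\gamma\restriction\alpha$ fails for \emph{every} pair $\alpha<\gamma$ with $\gamma \in Z$; your hypothesis gives failure only at the single level $\gamma_{i^*}$. Continuity (Fact~\ref{cont}) propagates membership in $L_\varphi$ \emph{downward} along a tail of $C$ below a given node, so it cannot spread the failure upward to the other levels $\alpha\in S$ that the lemma needs, and coherence plus Pressing Down yield regularity, not failure of the coloring. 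The mechanism that actually performs this promotion in the paper is elementarity: $\gamma_{i^*}=M_q^{i^*}\cap\omega_1$ for a model in $\mathcal M_q$, and the set of levels witnessing the universal failure is shown to be definable in that model and to contain $\gamma_{i^*}$, hence stationary (this is the Claim inside the proof of Theorem~\ref{proper}). Even that device requires the upper level sequence to be definable from parameters inside the model, which your $F_{i^*}\subseteq T_{\gamma_k}$ is not when $i^*<k$, since $\gamma_k$ lies above $M_q^{i^*}\cap\omega_1$. So the ``principal obstacle'' you name is a genuine gap rather than routine bookkeeping; filling it would amount to redoing the $\theta_i$ reverse induction from the properness proof, whereas the intended argument is a few lines once Theorem~\ref{proper} is in hand.
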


\begin{proof} Suppose the maximal condition forces that $\dot{X}_G$ is countable.
Let $\theta$ be a sufficiently large regular cardinal and let $M$ be a countable
elementary submodel of $H(\theta)$ containing all the relevant objects.
As shown in Theorem~\ref{proper}  $q=(\emptyset, \{ M\cap H(\omega_2)\})$ is an
$(M,\partial^*(K))$-generic condition. Therefore, $q\forces \dot{X_G}\subseteq M$.
If there is a node $t$ in $K^{[1]}_\varphi \cap T_\delta^{[1]}$  then
$r=(\{ t\}, \{ M\cap H(\omega_2)\})$ is a condition stronger than $q$
and $r\forces t\in \dot{X}_G$, a contradiction.
Assume now that $T^{[1]}_\delta \subseteq L^{[1]}_\varphi$. Since $\delta$
is a limit point of $C$, by Fact~\ref{countableB} there is an uncountable
antichain $Y$ in $T$ such that $\meet(Y)\cap K =\emptyset$, again a contradiction.
\end{proof}

Now, assume there is no uncountable antichain $Y\subseteq T$
such that $\meet(Y)\cap K=\emptyset$
and fix a $V$-generic $G$ over $\partial^*(K)$ containing a condition
as in Lemma~\ref{density}. We work in $V[G]$.
We can show that $\lev(X_G)$ is a club, but this is not necessary.
Namely, let $\bar{X}_G$ be the closure of $X_G$ in the tree topology.
Then by Fact~\ref{cont} all finite subsets of $\bar{X}_G$ contained
in one level of $T$ are in $K_\varphi$. Moreover, $\lev(\bar{X}_G)$ is
equal to the closure of $\lev(X_G)$ in the order topology and is a club.
Clearly, we also have $\pi(\bar{X}_G)=\bar{X}_G$.

\begin{remark}
Before continuing it is important to note a certain amount of absoluteness
between $V$ and $V[G]$. In $V$ we defined $(\mathscr F_n)^V$ to be the collection
of regular subtrees $R$ of $T^{[n]}$ such that $\meet(R)\cap K^{[n]}=\emptyset$.
The same definition in $V[G]$ gives a larger collection $\mathscr F_n^{V[G]}$
of subtrees of $T^{[n]}$. Nevertheless, the definition of $\mathscr F_n$
is $\Sigma_1$ with parameters $T$ and $K$. Since $\BPFA$ holds in $V$,
if a certain tree $A\in V$ is in $(\mathscr F_n^V)^{\perp}$ then
it is also in $(\mathscr F_n^{V[G]})^{\perp}$.
It follows that the same sequences $(N^n_\xi : \xi \in C)$
witness $\varphi(\mathscr F_n)$ in $V$ and in $V[G]$, for all $n$.
Therefore the definitions of the induced colorings $K^{[n]}_\varphi$, for
$n<\omega$, are also absolute between $V$ and $V[G]$.
\end{remark}

\begin{defi}
The poset $\mathcal Q$ consists of finite antichains $p$ in $\bar{X}_G$
such that $\meet(p)\subseteq K$, ordered by reverse inclusion.
\end{defi}

\begin{cla} $\mathcal Q$ is a c.c.c. poset.
\end{cla}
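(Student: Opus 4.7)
My plan is to adapt the argument of Fact~\ref{gen} to handle the multi-level structure of conditions in $\mathcal Q$. Let $\mathcal A=\{p_\alpha:\alpha<\omega_1\}\subseteq\mathcal Q$ be uncountable. By the $\Delta$-system lemma I may assume $p_\alpha=r\cup p'_\alpha$ for a fixed root $r\in\mathcal Q$ and pairwise disjoint $p'_\alpha$; working in the subposet below $r$, take $r=\emptyset$. Further thinning makes $|p_\alpha|$, the cardinality and order-type of $\lev(p_\alpha)$, and the number of elements of $p_\alpha$ at each of its levels constant, and arranges $\max\lev(p_\alpha)<\min\lev(p_\beta)$ for $\alpha<\beta$. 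Let $\delta_\alpha=\max\lev(p_\alpha)\in\lev(\bar X_G)$ and view $\sigma_\alpha=p_\alpha\cap T_{\delta_\alpha}$ as an element of $T^{[m]}_{\delta_\alpha}$; since $p_\alpha\subseteq\bar X_G$ and every finite subset of $\bar X_G$ contained in a single level belongs to $K_\varphi$, we have $\sigma_\alpha\in K^{[m]}_\varphi$.

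Next, thin via Fact~\ref{delta} so that $\{\sigma_\alpha\}$ is a regular level sequence, and use pressing down on the coherence data of the finite structure of $p_\alpha$ to fix an ordinal $\xi_0<\omega_1$, a tuple $\bar\sigma\in T^{[m]}_{\xi_0}$, and the complete common structure of $p_\alpha$ below $\xi_0$. Replacing the indexing set by a stationary subset of $C$ if necessary, apply Fact~\ref{gen} to obtain distinct $\alpha<\beta$ with $\sigma_\alpha\meet\sigma_\beta\in K^{[m]}$ at level $\delta=\Delta(\sigma_\alpha,\sigma_\beta)\geq\xi_0$.

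To conclude, I verify that $p_\alpha\cup p_\beta\in\mathcal Q$. The antichain property follows from the coordinate-wise split at $\delta$ combined with the level disjointness arranged in the first step. For the meet condition, a case analysis parallel to the end of the proof of Lemma~\ref{mainlemma} shows that every meet $s\meet t$ with $s\in p_\alpha$ and $t\in p_\beta$ either equals a coordinate of $\sigma_\alpha\meet\sigma_\beta\in K^{[m]}$ or coincides with a meet already internal to $p_\alpha$ or $p_\beta$, and therefore lies in $K$. The main obstacle is the meet analysis when both $s$ and $t$ lie strictly below the top levels of their respective conditions: here one must combine the coherence of $T$ with the pressing-down alignment below $\xi_0$ to show that $s\meet t$ reduces either to the top-level meet controlled by Fact~\ref{gen} or to a meet already present within one of $p_\alpha, p_\beta$. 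Once this is verified, compatibility of $p_\alpha$ and $p_\beta$ follows and $\mathcal Q$ is c.c.c.
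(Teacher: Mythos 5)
There is a genuine gap, and it is exactly at the point you flag as ``the main obstacle.'' By taking $\sigma_\alpha$ to be only the top level $p_\alpha\cap T_{\delta_\alpha}$ of the condition, you leave all cross-meets $s\meet t$ with $s\in p_\alpha$ lying strictly below level $\delta_\alpha$ completely uncontrolled. Your claimed dichotomy --- that every such meet either equals a coordinate of $\sigma_\alpha\meet\sigma_\beta$ or coincides with a meet internal to one of the two conditions --- is not true for this setup: pressing down on the structure of each $p_\alpha$ separately only pins down the conditions below a fixed $\xi_0$, but an element $s\in p_\alpha$ of intermediate height and an element $t\in p_\beta$ may well have $\xi_0\le\Delta(s,t)<\Ht(s)$, in which case $s\meet t$ is neither an internal meet of $p_\alpha$ or $p_\beta$ nor visible to the application of Fact~\ref{gen}, which only sees the top-level tuples. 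Coherence of $T$ does not rescue this; controlling meets at these intermediate levels is precisely what the $K_\varphi$ machinery is for, and you have not fed the lower-level elements into it.

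The paper's proof avoids the problem by a different choice of tuple: for each $\alpha\in\lev(\bar X_G)$ it picks $p_\alpha\in\mathcal A$ with $\Ht(t)\ge\alpha$ for \emph{all} $t\in p_\alpha$, and lets $\sigma_\alpha$ enumerate the distinct elements of $\{t\restriction\alpha : t\in p_\alpha\}$ --- the projection of the \emph{entire} condition to the single level $\alpha$. Since $\pi(\bar X_G)=\bar X_G$ and finite one-level subsets of $\bar X_G$ lie in $K_\varphi$, this $\sigma_\alpha$ is in $K^{[n]}_\varphi$. Now, after thinning so that the sequence is regular and so that $\sigma_\alpha(i)\meet\sigma_\beta(j)=\sigma_\alpha(i)\meet\sigma_\alpha(j)\in K$ for $i\ne j$, every cross-meet between $p_\alpha$ and $p_\beta$ occurs below $\min(\alpha,\beta)$ and is therefore a meet of coordinates of $\sigma_\alpha$ and $\sigma_\beta$; Fact~\ref{gen} then handles the diagonal coordinates and the thinning handles the off-diagonal ones. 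If you replace your top-level extraction by this full projection, your argument goes through; as written, the compatibility verification does not close.
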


\begin{proof}
Suppose $\mathcal A$ is an uncountable subset of $\mathcal Q$.
We need to find two elements of $A$ which are compatible.
By a standard $\Delta$-system argument we may assume
that the elements of $\mathcal A$ are disjoint and have
the same size. For each $\alpha \in \lev(\bar{X}_G)$
choose $p_\alpha \in \mathcal A$ such that $\Ht(t)\geq \alpha$,
for all $t \in p$. We can assume that the $p_\alpha$ are
distinct. Let $\sigma_\alpha$ be the enumeration
in $\leq_{\rm lex}$-increasing order of distinct elements of
$\{ t\restriction \alpha : t\in p_\alpha\}$.
There is a stationary subset $S$ of $\lev(\bar{X}_G)$
and an integer $n$ such that $\sigma_\alpha$ has size
$n$, for all $\alpha\in S$. Note that
$\sigma_\alpha \in K^{[n]}_\varphi$, for all $\alpha \in S$.
By shrinking $S$ further we may assume that $\{ \sigma_\alpha : \alpha \in S\}$
is a regular level sequence and that for every $\alpha, \beta \in S$
and every distinct $i,j<n$
$$
\sigma_\alpha(i)\meet \sigma_\beta(j) =
\sigma_\alpha(i)\meet \sigma_\alpha(j)\in K.
$$
Now, by Fact~\ref{gen} we can find distinct $\alpha,\beta \in S$
such that $\sigma_\alpha \meet \sigma_\beta \in K^{[n]}$,
i.e. $\sigma_\alpha(i)\meet \sigma_\beta(i)\in K$, for all
$i<n$. It follows that $p_\alpha$ and $p_\beta$ are compatible
in $\mathcal Q$.
\end{proof}

By a standard argument there is a condition $q\in \mathcal Q$
which forces the $\mathcal Q$-generic $H$ to be uncountable.
Therefore, by forcing with $\mathcal Q$ below $q$ over $V[G]$
we obtain an uncountable antichain $H$ of $T$ such that
$\meet(H)\subseteq K$. Since $\partial^*(K)* \mathcal Q$
is proper, by $\BPFA$, we have such an antichain in $V$.
Thus, we have proved the main theorem which we now state.

\begin{teo}
Assume $\BPFA$ and $\varphi$. Then $\CAT$ holds and hence
there is a five element basis for the uncountable linear orders.
\qed
\end{teo}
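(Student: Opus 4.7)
The plan is to derive $\CAT$ from $\BPFA + \varphi$, since the Baumgartner / Abraham--Shelah equivalence quoted in the introduction then immediately yields the five element basis for uncountable linear orders. Fix the coherent special Aronszajn tree $T$ and an arbitrary $K\subseteq T$; the goal is an uncountable antichain $X\subseteq T$ with $\meet(X)$ either contained in $K$ or disjoint from $K$. I split on the alternative provided by Fact~\ref{countableB}: if there is already an uncountable antichain $Y\subseteq T$ with $\meet(Y)\cap K=\emptyset$, we are done. Otherwise Lemma~\ref{density} furnishes a condition $p\in\partial^*(K)$ forcing the generic object $\dot{X}_G$ to be uncountable, and we pass to a $V$-generic $G$ through $p$ and work in $V[G]$.

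In $V[G]$ let $\bar{X}_G$ be the tree-topology closure of $X_G$. By Fact~\ref{cont} (continuity of the induced colorings) $\bar{X}_G$ is still $\pi$-closed and each level intersection lies in $K_\varphi$, while now $\lev(\bar{X}_G)$ is a club. The $\Sigma_1$-definition of $\mathcal F_n$ combined with $\BPFA$ holding in $V$ ensures that the chains $\langle N^n_\xi:\xi\in C\rangle$ continue to witness $\varphi_0(\mathcal F_n)$ in $V[G]$ (as per the absoluteness remark), so all of the Section~2 coloring machinery, in particular Fact~\ref{gen}, is available. Now define the poset $\mathcal Q$ of finite antichains $p\subseteq\bar{X}_G$ with $\meet(p)\subseteq K$, ordered by reverse inclusion. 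The c.c.c.\ of $\mathcal Q$ is proved by a $\Delta$-system plus pressing-down argument: given an uncountable $\Acal\subseteq\mathcal Q$, pick for each $\alpha\in\lev(\bar X_G)$ a $p_\alpha\in\Acal$ with all its elements of height at least $\alpha$, and let $\sigma_\alpha$ enumerate the distinct $\alpha$-initial segments of $p_\alpha$. Refine to a stationary $S$ of some fixed level-size $n$ with $\{\sigma_\alpha:\alpha\in S\}$ regular and all "cross" meets $\sigma_\alpha(i)\meet\sigma_\beta(j)$ ($i\neq j$) already in $K$; then $\sigma_\alpha\in K_\varphi^{[n]}$ for all $\alpha\in S$ and Fact~\ref{gen} produces distinct $\alpha,\beta\in S$ with $\sigma_\alpha\meet\sigma_\beta\in K^{[n]}$, so that $p_\alpha$ and $p_\beta$ are compatible in $\mathcal Q$.

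Forcing with $\mathcal Q$ below a condition forcing the generic antichain to be uncountable then produces an uncountable antichain $H\subseteq T$ with $\meet(H)\subseteq K$, the other half of the $\CAT$ dichotomy. Finally, I pull back to $V$ using $\BPFA$. The two-step iteration $\partial^*(K)\ast\dot{\mathcal Q}$ is proper, since $\partial^*(K)$ is proper by Theorem~\ref{proper} and $\mathcal Q$ is c.c.c., and the assertion "there exists an uncountable antichain $H\subseteq T$ with $\meet(H)\subseteq K$" is $\Sigma_1$ in the parameters $T$ and $K$, both of which can be coded as subsets of $\omega_1$; hence $\BPFA$ reflects $H$ down to $V$. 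The heavy lifting has already been done in Theorem~\ref{proper} and in Lemmas~\ref{mainlemma} and~\ref{ma_lemma}, so the present argument is essentially an assembly. The only subtle point is the absoluteness observation that lets us keep using $\varphi_0(\mathcal F_n)$ and the induced colorings in $V[G]$ without rebuilding them; the rest is bookkeeping.
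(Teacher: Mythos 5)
Your proposal is correct and follows essentially the same route as the paper: split on whether an uncountable antichain $Y$ with $\meet(Y)\cap K=\emptyset$ already exists, otherwise force with $\partial^*(K)$ below the condition from Lemma~\ref{density}, pass to the closure $\bar{X}_G$, verify the c.c.c.\ of $\mathcal Q$ via Fact~\ref{gen}, and reflect the resulting $\Sigma_1$ statement back to $V$ using $\BPFA$ and the properness of the two-step iteration. The only addition you make is to spell out explicitly that the final statement is $\Sigma_1$ in parameters codable by subsets of $\omega_1$, which the paper leaves implicit.
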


\def\Dbar{\leavevmode\lower.6ex\hbox to 0pt{\hskip-.23ex \accent"16\hss}D}

\end{document}